\DeclareFixedFont{\ttb}{T1}{txtt}{bx}{n}{9} 
\DeclareFixedFont{\ttm}{T1}{txtt}{m}{n}{9}  
\definecolor{keyword}{rgb}{0,0,0.5}
\definecolor{emph}{rgb}{0.6,0,0}
\definecolor{string}{rgb}{0,0.5,0}
\theoremstyle{plain}
\newtheorem{theorem}{Theorem}[section]
\newtheorem{lemma}[theorem]{Lemma}
\newtheorem{definition}[theorem]{Definition}
\newtheorem{construction}[theorem]{Construction}
\theoremstyle{definition}
\newtheorem{example}[theorem]{Example}
\theoremstyle{remark}
\theoremstyle{definition}
\def\scat {\mathcal{C}}
\def\gs {\mathfrak{C}}
\def\Obj {\mathrm{Obj}}
\def\Arr {\mathrm{Arr}}
\def\bh {\scat\Lambda}
\def\cg {\Lambda}
\def\idGrp {\mathbf{1}}
\def\GL {\mathrm{GL}}
\def\L {\widehat{L}}
\def\T {T}
\def\chat {\widehat{\Psi}_{G}}
\def\lchat {\widehat{\Psi}_{G_{\sigma}}}
\newcommand{\sheafpath}[1]{M_{#1}}
\newcommand{\idCat}[1]{\mathbbm{1}_{#1}}
\title{Sheaves over complexes of groups and developability}
\author{Joshua L. Faber \\ \href{mailto:joshlfaber@gmail.com}{joshlfaber@gmail.com}}
\date{\today}
\begin{document}

\maketitle
\begin{abstract}
We define the notion of a sheaf over a complex of groups.  As an application, we give a criterion for the developability of a complex of groups. When the developability is witnessed by a morphism to $\mathrm{GL}(V)$ for some $V$, our criterion is a characterization
of developability.
\end{abstract}

\section{Introduction}
Motivated by Joel Friedman's construction of sheaves over digraphs \cite{MR3289057} we construct sheaves over categories. Our definition is deceptively simple to state. 

 \begin{restatable}[]{definition}{Sheaf} \label{D: sheaf}
 Let $\scat$ be a category, $\mathbb{K}$ a field, and $\mathbb{K}$-Vect the category of $\mathbb{K}$-vector spaces. A \textbf{sheaf} over $\scat$ is a functor $\mathcal{F}:\scat\rightarrow \mathbb{K}$-Vect.
 \end{restatable}
 In \cite[III.$\scat$  2.1 p.535]{MR1744486} Bridson and Haefliger define a complex of groups $\cg$ and in \cite[III.$\scat$ 2.8 p.538]{MR1744486} define what we call the \textit{B.H. category} $\bh$. The B.H. category $\bh$ is constructed from $\cg$ and encodes the data of $\cg$ into the structure of the space $\bh$ so that $\pi_1(\cg) \cong \pi_1(\bh)$. By placing a sheaf $\mathcal{F}$ over $\bh$ we provide a functor from $\bh$ to the category $\mathbb{K}$-Vect. Functors from the B.H. category $\bh$ of a complex of groups $\cg$ are not a new area of research as the next two examples show. Let $\cg$ be a complex of groups, $\bh$ the B.H. category and $\mathrm{Ab}$ the category of abelian groups. In \cite[4.1.5 p.298]{haefliger1992extension} Haefliger defines a $\cg$-module $\mathcal{M}$ as a functor $\mathcal{M}:\cg\rightarrow \mathrm{Ab}$. Let $\cg$, $\cg'$ be complexes of groups, $\bh$, $\bh'$ their B.H categories and $\phi:\cg\rightarrow \cg'$ a morphism of complexes of groups. In \cite[III.$\scat$. 2.4 p.536]{MR1744486} Bridson and Haefliger show how $\phi$ induces a functor $\widehat{\phi}:\bh\rightarrow\bh'$ between B.H categories which carries the same data as $\phi$ but translated into category theory. 
 
 Unlike graphs of groups from Bass--Serre theory, complexes of groups cannot always be realized as the result of a global quotient. When $\cg$ \textit{can} be realized as the result of a global quotient we say that $\cg$ is \emph{developable}. In Bass--Serre Theory a graph of groups is developable when one can construct a Bass--Serre tree. The \emph{fundamental theorem of Bass--Serre theory} states that all graphs of groups are developable \cite[III.$\scat$ 2.17 p.544]{MR1744486}. When $\cg$ is developable the usual covering space theory of spaces works when we translate to the side  of the B.H. category $\bh$. In \cite[III.$\scat$ 2.15 p.544]{MR1744486} Bridson and Haefliger characterize the development of a complex of groups $\cg$ in terms of a group $G$ which is a \emph{witness} to the development of $\cg$, see Theorem \ref{key theorem}.
 
 In this paper we show that when the group $\GL(V)$ is a witness to the developability of $\cg$ we can characterize the developability of $\cg$ in terms of a sheaf $\mathcal{F}$ that satisfies the following properties. Let $\cg$ be a complex of groups, $\bh$ the B.H. category and $\mathcal{F}$ a sheaf. If there exists an $n\in \mathbb{N}$ so that for each $\sigma\in \Obj(\bh)$ we have $\dim(\mathcal{F}(\sigma)) = n$ then $\mathcal{F}$ has \textit{constant rank}. For $\sigma\in \Obj(\bh)$ let $S_{\sigma}\subseteq \bh$ be the full subcategory over $\sigma$. If for each $\sigma\in \Obj(\bh)$ the restriction $\mathcal{F}|_{S_{\sigma}}:S_{\sigma}\rightarrow \mathbb{K}\mathrm{-Vect}$ is injective then $\mathcal{F}$ is \textit{injective on local groups}. We say that $\mathcal{F}$ satisfies the \emph{dev properties} if \emph{i}) $\mathcal{F}$ has constant rank, \emph{ii}) for each morphism $\lambda\in\mathrm{Mor}(\bh)$ the linear map $\mathcal{F}(\lambda)$ is invertible, and \emph{iii}) $\mathcal{F}$ is injective on local groups. We can now state our main theorem. 
 
\begin{restatable}[]{thmA}{Dev} \label{main}
Let $\cg$ be a complex of groups and $\bh$ the B.H. category. The group $\mathrm{GL}(V)$ is a witness to the developability of $\cg$ if and only if there exists a sheaf $\mathcal{F}$ over $\bh$ satisfying the dev-properties.
\end{restatable}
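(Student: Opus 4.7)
The plan is to prove each direction by transporting between morphisms of complexes of groups and functors out of the B.H.\ category, using the correspondence that Bridson--Haefliger establish. The key intermediary is the one-object category $\widehat{G}$ attached to a group $G$: a morphism of complexes of groups $\phi:\cg\to G$ (regarding $G$ as a trivial complex of groups on a point) induces, by \cite[III.$\scat$ 2.4]{MR1744486}, a functor $\widehat{\phi}:\bh\to\widehat{G}$, and composing such a functor with a representation $\widehat{G}\to\mathbb{K}\mathrm{-Vect}$ produces a sheaf on $\bh$ in the sense of Definition \ref{D: sheaf}.

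For the forward direction, suppose $\GL(V)$ witnesses the developability of $\cg$. By Theorem \ref{key theorem}, I extract a morphism $\phi:\cg\to\GL(V)$ whose restriction to each local group $G_\sigma$ is injective. Let $\rho:\widehat{\GL(V)}\to\mathbb{K}\mathrm{-Vect}$ denote the tautological representation, sending the unique object to $V$ and each $g\in\GL(V)$ to itself as a linear automorphism; $\rho$ is faithful. Define $\mathcal{F}:=\rho\circ\widehat{\phi}$. Constant rank is immediate because every object of $\bh$ is sent to $V$; invertibility of every $\mathcal{F}(\lambda)$ holds because the image of every morphism lies in $\GL(V)$; injectivity on each $S_\sigma$ follows because $\widehat{\phi}$ restricts to $\phi$ on the local group and $\rho$ is faithful.

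For the reverse direction, suppose $\mathcal{F}$ satisfies the dev-properties, with common dimension $n$. Choosing a basis of $\mathcal{F}(\sigma)$ for each $\sigma\in\Obj(\bh)$ yields isomorphisms $\mathcal{F}(\sigma)\cong V$ for $V:=\mathbb{K}^n$, and property (ii) then upgrades $\mathcal{F}$ to a functor $\widetilde{\mathcal{F}}:\bh\to\widehat{\GL(V)}$ sending every object to the unique object of $\widehat{\GL(V)}$ and each morphism $\lambda$ to the matrix of $\mathcal{F}(\lambda)$ in the chosen bases. From $\widetilde{\mathcal{F}}$ I extract a morphism $\phi:\cg\to\GL(V)$ by reading off the local homomorphisms $\phi_\sigma:G_\sigma\to\GL(V)$ from the restrictions to the subcategories $S_\sigma$, and the edge data from the values of $\widetilde{\mathcal{F}}$ on the morphisms of $\bh$ induced by edges of $\cg$. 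Injectivity of $\phi$ on each local group follows from property (iii) and the faithfulness of the basis identification, so $\GL(V)$ witnesses developability by Theorem \ref{key theorem}.

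The main obstacle is making the reverse direction fully rigorous. Two checks are required: (a) that the matrix assignment gives a genuine functor to $\widehat{\GL(V)}$, which is direct, and more substantively (b) that the extracted tuple $(\phi_\sigma,\phi(a))$ satisfies the twisting and cocycle identities that define a morphism of complexes of groups. Both reduce to establishing a converse to \cite[III.$\scat$ 2.4]{MR1744486}: the B.H.\ construction $\cg\mapsto\bh$ should give a bijection between morphisms of complexes of groups $\cg\to G$ and functors $\bh\to\widehat{G}$. Once this dictionary is in hand, the three dev-properties translate term by term into the hypotheses of the Bridson--Haefliger developability criterion.
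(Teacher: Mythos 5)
Your proposal is correct, and its forward direction coincides with the paper's (Lemma \ref{onedirection}): pull the tautological sheaf back along the induced functor $\bh\to\widehat{\GL(V)}$ and check the three dev-properties exactly as you do. The reverse direction rests on the same underlying idea as the paper's --- trivialize $\mathcal{F}$ by isomorphisms $V\to\mathcal{F}(\sigma)$ and read off a morphism $\cg\to\Theta(\GL(V))$ by conjugation --- but is decomposed differently. The paper fixes a maximal tree $T$ in the scwol and takes the trivializations to be sheaf paths $M_\sigma$ along unique tree paths (Definitions \ref{lgf}, \ref{gel}, \ref{keydef}), then verifies the two axioms of a morphism of complexes of groups by direct computation in the Key Lemma \ref{keylemma}, using Lemmas \ref{L: comp 1.a} and \ref{L: comp 1.b}. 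You instead choose arbitrary bases, package the conjugated sheaf as a functor $\widetilde{\mathcal{F}}:\bh\to\widehat{\GL(V)}$, and appeal to the bijection between morphisms $\cg\to\Theta(G)$ and functors $\bh\to\widehat{G}$, which is the paper's Lemma \ref{onetwo}. Your route is a little more economical (no maximal tree, hence no implicit connectivity assumption on the scwol), but the step you flag as the ``main obstacle'' --- checking that the extracted tuple $(\phi_\sigma,\phi(a))$ satisfies $\mathrm{Ad}(\phi(a))\phi_{i(a)}=\phi_{t(a)}\psi_a$ and the identity $\phi_{t(a)}(g_{a,b})\phi(ab)=\phi(a)\phi(b)$ --- is precisely the computation the paper carries out by hand; Lemma \ref{onetwo} is only stated, not proved, in the paper, so invoking it defers that verification to Bridson--Haefliger rather than discharging it. The verification itself is short either way: apply the functor $\widetilde{\mathcal{F}}$ to the relations $(\mathbf{1},a)(g,\mathbbm{1}_{i(a)})=(\psi_a(g),\mathbbm{1}_{t(a)})(\mathbf{1},a)$ and $(\mathbf{1},a)(\mathbf{1},b)=(g_{a,b},\mathbbm{1}_{t(a)})(\mathbf{1},ab)$ in $\bh$, which is exactly the content of Lemmas \ref{L: comp 1.a} and \ref{L: comp 1.b}. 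So your plan closes, and the comparison is: the paper proves the needed special case of the morphism/functor dictionary explicitly, while you use the dictionary as a black box and gain a cleaner, tree-free construction of the trivializations.
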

\subsection{Acknowledgements}
We would like to thank Michelle Chu whose suggestions helped inspire Lemma \ref{onedirection} and to Daniel Groves whose continuous support helped guide this paper. 
\section{Background}
Scwols were first introduced by Bridson and Haefliger in \cite[III.$\scat$ 1 p.520]{MR1744486} to define a complex of groups construction. Complexes of groups are abstract objects which are natural generlizations of graphs of groups. Given a polyhedral complex $\mathcal{M}$ there is a process that constructs a scwol $\mathcal{X}$ from $\mathcal{M}$ and if given a group $G$ which acts on $\mathcal{M}$ we can construct a complex of groups $\cg$ from the quotient space $G\setminus \mathcal{X}$. There is no loss in data when constructing $\cg$ over a scwol $\gs$ in place of a polyhedral complex $\mathcal{M}$. We start with the definition of a scwol $\gs$ and the construction of a scwol from a complex $\mathcal{M}$. We then define complexes of groups $\cg$ as well as their associated B.H. categories $\bh$ and give results concerning these gadgets. The following can be found in \cite[III.$\scat$ 1 p 520]{MR1744486},
\begin{definition}
A \textbf{scwol} $\mathcal{X}$, is a small category such that,
\begin{itemize}
    \item for all composable arrows $a,b\in \Arr(\mathcal{X})$ if $i(a) \neq t(a)$ and $i(b) \neq t(b)$ then $i(ab) \neq t(ab)$; and
    \item for all arrows $a\in \Arr(\mathcal{X})$ if $i(a) = t(a) $ then $a$ is the identity morphism. 
\end{itemize}
\end{definition}
\begin{definition}
Let $\mathcal{X}$ be a scwol, $k \in \mathbb{N}$ a natural number then $E^{(k)}(\gs)$ is the set of sequences $(a_1, \cdots, a_n)$ of composable arrows in $\mathcal{X}$ where $i(a_l) = t(a_{l+1})$ for $1\leq l < k$.  
\end{definition}
\vspace{.1in}

Given a polyhedral complex $\mathcal{M}$ we construct a scwol through the following construction, 

\begin{construction}\label{scwol construction}
Let $\mathcal{M}$ be a polyhedral complex. The \textbf{complex to scwol construction} $\mathfrak{M}$ takes $\mathcal{M}$ and constructs the scwol $\mathfrak{M}(\mathcal{M})$ by taking the first barycentric subdivision of $\mathcal{M}$ and,
\begin{itemize}
    \item associating the barycenter of each cell $\sigma \in \mathcal{M}$ to an object $\sigma \in \Obj(\mathfrak{M}(\mathcal{M}))$; and 
    \item associating each $1$-simplex of the barycentric subdivision of $\mathcal{M}$ to an arrow $a\in \Arr(\mathfrak{M}(\mathcal{M}))$. 
\end{itemize}
\end{construction}
 A directed graph $\Gamma$ is a $1$-dimensional polyhedral complex. In \cite[1 p.6]{MR3289057} Joel Friedman defines a sheaf over $\Gamma$ by placing a $\mathbb{F}$-vector space over each vertex, a $\mathbb{F}$-vector space over each edge, and a linear morphism over each attaching map. We give Friedman's definition below, 
 
 \begin{definition}[Joel Friedman, {\cite[1 p.6]{MR3289057}}]
A \textbf{sheaf} $\mathcal{F}$ of finite dimensional $\mathbb{F}$-vector spaces over a digraph $\Gamma$ is,
\begin{itemize}
    \item[(1)]  a finite dimensional $\mathbb{F}$-vector space $\mathcal{F}(v)$ for each $v\in V\Gamma$;
    \item[(2)] a finite dimensional $\mathbb{F}$-vector space $\mathcal{F}(e)$ for each $e\in E\Gamma$;
    \item[(3)] a linear morphism $\mathcal{F}(t,e):\mathcal{F}(e)\rightarrow \mathcal{F}(t(e))$ for each edge $e\in E\Gamma$; and
    \item[(4)] a linear morphism $\mathcal{F}(h,e):\mathcal{F}(e)\rightarrow \mathcal{F}(h(e))$ for each edge $e\in E\Gamma$.
\end{itemize}
\end{definition}

 Let $\gs$ be the scwol constructed from $\Gamma$ as in \ref{scwol construction}. In this situation, it is easy to see that our definition of a sheaf from Definition \ref{D: sheaf} is equivalent to Friedman's. 
\vspace{.1in}

We now give the definition of complexes of groups. Complexes of groups are natural generalizations of graphs of groups from Bass--Serre theory. Our definition is derived from \cite[III.$\scat$  2.1 p.535]{MR1744486}.

\begin{definition}\label{d: complexes of groups}
A \textbf{complex of groups} $\Lambda = \big(\gs$, $G_{\sigma}$, $\psi_a$, $g_{a,b}\big)$ is,
\begin{itemize}
    \item A scwol $\gs$;
    \item For each $\sigma\in \Obj(\gs)$ a group $G_{\sigma}$ called the \textbf{local group at } $\sigma$;
    \item For each $a\in\Arr(\gs)$ an injective homomorphism $\psi_a:G_{t(a)}\rightarrow G_{i(a)}$; and 
    \item For each pair of composable edges $(a,b)\in E^{(2)}(\gs)$, a twisting element $g_{a,b}\in G_{t(a)}$ which satisfies, 
    \begin{itemize}
        \item $\mathrm{Ad}(g_{a,b})\psi_{ab} = \psi_a\psi_b$
        
        where $\mathrm{Ad}(g_{a,b})$ is the conjugation by $g_{a,b}$ in $G_{t(a)}$,
        
        \item For each triple $(a,b,c)\in E^{(3)}(\gs)$ of composable elements we have the cocycle condition:
        $$
        \psi_a\big(g_{b,c}\big)g_{a,bc}=g_{a,b}g_{ab,c}
        $$
    \end{itemize}
\end{itemize}
\end{definition}
One way to construct a complex of groups is via a \textit{group action} on a swol, see \cite[III.$\scat$ 1.11 p.528]{MR1744486}.  
\vspace{.2in}

In \cite[III.$\scat$ 2.8 p.538]{MR1744486} Bridson and Haefliger construct a category $\bh$ from a complex of groups $\cg$. We give the definition of $\bh$, the \textit{B.H category}, below. 

\begin{definition}
Let  $\Lambda =\big(\gs, G_{\sigma}, \psi_a, g_{a,b}\big)$ be a complex of groups over a scwol $\gs$. \textbf{The B.H. category of} $\Lambda$ is the category $C\Lambda$ given by,
\begin{itemize}
    \item $\Obj\big(C\Lambda\big) = \Obj(\gs)$;
    \item $\Arr\big(C\Lambda\big) = \big\lbrace (g,\alpha): \alpha\in \mathrm{Mor}\big(\gs\big)$, $g\in G_{t(\alpha)}\big\rbrace$ with the adjacency relations;
    \begin{itemize}
        \item $i(g,\alpha) = i(\alpha)$
        \item $t(g, \alpha) =t(\alpha)$
    \end{itemize}
    For arrows $(\alpha, g)$, $(\beta, h)\in \Arr(C\Lambda)$, $i(\alpha) = t(\beta)$ we define their composition,
    $$
    (g, \alpha)(h, \beta) = (g\psi_{\alpha}(h)g_{a,b}, \alpha\beta), \hspace{.1in}g_{a,b}\in G_{t(a)}
    $$
\end{itemize}
\end{definition}
Let $\pi_1(\cg)$ be the fundamental group of a complex of groups as in \cite[II.$\scat$ 3.7 p.549]{MR1744486} and $\pi_1(\bh)$ the fundamental group of the category as defined in \cite[III.$\scat$ A.13 587]{MR1744486}. We get,
\begin{theorem}\textup{\cite[III.$\scat$ A.12 p.578]{MR1744486}}
Let $\cg$ be a complex of groups and $\bh$ the B.H. category. Then,
$$
\pi_1(\cg) \cong \pi_1(\bh)
$$
\end{theorem}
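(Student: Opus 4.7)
The plan is to prove $\pi_1(\cg)\cong \pi_1(\bh)$ by comparing presentations. Fix a basepoint $\sigma_0\in\Obj(\gs)=\Obj(\bh)$. Bridson--Haefliger define $\pi_1(\cg,\sigma_0)$ via equivalence classes of edge--paths in $\gs$ decorated by local group elements modulo relations coming from (i) the local group multiplications, (ii) the injections $\psi_a$, (iii) the twisting cocycles $g_{a,b}$, and (iv) backtracking along $a\bar a$. On the other hand $\pi_1(\bh,\sigma_0)$ is computed from the nerve (equivalently, as loops of composable arrows in $\bh$ up to the standard homotopy relations). Because $\Arr(\bh)=\{(g,\alpha)\}$, a loop in $\bh$ is manifestly a sequence of decorated edges, so the two descriptions are combinatorially parallel.

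The key step is to construct explicit mutually inverse homomorphisms. In one direction, given a loop $\gamma=(g_0,\alpha_1,g_1,\alpha_2,\dots,\alpha_n,g_n)$ representing a class in $\pi_1(\cg,\sigma_0)$, I send it to the composite $(g_0,\mathrm{id})\cdot(1,\alpha_1)\cdot(g_1,\mathrm{id})\cdots$, where edges of $\gs$ traversed backwards are rewritten using the local groups at their endpoints (as in Bridson--Haefliger's description of $\pi_1(\bh)$ via paths with formal inverses for arrows). In the reverse direction, any loop of composable arrows $(g_1,\alpha_1)\cdots(g_n,\alpha_n)$ in $\bh$ already has the shape of a $\cg$-word, and multiplication in $\bh$ records exactly how the twists $g_{a,b}$ and the $\psi_a$ enter when edges are composed. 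The definitions are set up so that the two maps are tautologically inverse on the level of generators.

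The main work is checking well-definedness: every relation imposed in Bridson--Haefliger's presentation of $\pi_1(\cg,\sigma_0)$ must map to a trivial element in $\pi_1(\bh,\sigma_0)$, and conversely each homotopy relation in the nerve of $\bh$ must be a consequence of the $\cg$-relations. The compatibility $\mathrm{Ad}(g_{a,b})\psi_{ab}=\psi_a\psi_b$ together with the cocycle condition $\psi_a(g_{b,c})g_{a,bc}=g_{a,b}g_{ab,c}$ is designed to match the associativity of composition in $\bh$, so the triangle $2$-simplices in the nerve translate precisely into cocycle relations. I expect this bookkeeping to be the principal obstacle: one must verify that inversion of edges corresponds coherently in the two settings, and that the basepoint choice does not affect the isomorphism class, for which a standard change-of-basepoint argument through a spanning tree of $\gs$ suffices.

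Since the statement is quoted from \cite{MR1744486}, a self-contained proof is unnecessary here, but the outline above is the natural route and is essentially the argument of Bridson--Haefliger.
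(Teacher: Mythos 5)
The paper offers no proof of this statement: it is imported verbatim from Bridson--Haefliger (III.$\mathcal{C}$.A.12), so there is no internal argument to compare against. Your outline follows essentially the route of that cited source --- matching the presentation of $\pi_1(\cg)$ by decorated edge--paths against loops of arrows $(g,\alpha)$ in $\bh$, with the compatibility and cocycle conditions accounting for composition --- and is the natural and correct approach, though as you acknowledge the well-definedness bookkeeping is only sketched, not carried out.
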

\begin{definition}
Let $\gs$ and $\gs'$ be scwols. 
\begin{itemize}
    \item A \textbf{morphism of scwols} $f:\gs\rightarrow \gs'$ is a functor.

\item A \textbf{non-degenerate morphism} of scwols is a functor such that for each $v\in\Obj(\gs)$ there is a bijection of sets, 
$$
\big\lbrace a\in \Arr(\gs): i(a) = v\big\rbrace \leftrightarrow \big\lbrace a'\in \Arr(\gs'): i(a) = f(v)\big\rbrace.
$$
\end{itemize}
\end{definition}
We define a morphism of complexes of groups over a morphism of scwols. 

\begin{definition}\textup{\cite[III.$\scat$.2.4, p. 536]{MR1744486}}\label{d: morph of complexes of groups}
Let $\cg = (\gs, G_\sigma, \psi_a, g_{a,b})$ and $\cg' =  (\gs', G_\sigma', \psi_a', g_{a,b}')$ be a complexes of groups and $f:\gs\rightarrow \gs'$ a functor. A (possibly degenerate) \textbf{morphism of complexes of groups} $\phi:(\phi_{\sigma},\phi(a)):\cg\rightarrow \cg'$ over $f$ consists of:
\begin{itemize}
    \item a homomorphism $\phi_{\sigma}:G_{\sigma}\rightarrow G_{f(\sigma)}$ for each $\sigma\in \Obj(\gs)$; and
    \item an element $\phi(a)\in G_{t(f(a))}$ for each $a\in \Arr(\gs)$ such that,
    \begin{itemize}
        \item $\mathrm{Ad}(\phi(a))\psi_{f(a)}(\phi_{i(a)}) = \phi_{t(a)}\psi_a$,
        
        where $\mathrm{Ad}(\phi(a))$ is the conjugation by $\phi(a)$ in $G_{t(f(a)))}$,
        
        and for each pairs of composable arrows $(a,b)\in E^{(2)}(\gs)$,
        \item $\phi_{t(a)}(g_{a,b})\phi(ab) =\phi(a)\psi_{f(a)}(\phi(b))g_{f(a),f(b)}$.
    \end{itemize}
\end{itemize}
\end{definition}
When each of the homomorphisms are injective we get,
\begin{definition}\label{d: inj local complex groups}
Let $\phi = (\phi_{\sigma}, \phi(a)):\cg\rightarrow\cg'$ be a morphism of complexes of groups. If each homorphism $\phi_{\sigma}$ is injective we say that $\phi$ is \textbf{injective on local groups}. 
\end{definition}
A B.H. category is constructed to carry the same data as the complex of groups it represents. It follows that a morphism of complexes of groups induces a functor between their B.H. categories. This is the content of the next definition. 
\begin{definition}\textup{\cite[III.$\scat$ 2.8 p.538]{MR1744486}} \label{induced functor}
Let $\cg$, $\cg'$ be complexes of groups and $\bh$, $\bh'$ their B.H. categories. A morphism of complexes of groups $\phi:\cg\rightarrow \cg'$ induces the \textbf{induced functor} between their B.H. categories $\widehat{\phi}:\bh\rightarrow\bh'$ and a functor between B.H. categories $\widehat{\phi}:\bh\rightarrow\bh'$ induces the \textbf{induced morphism} between the complexes of groups they represent $\phi:\cg\rightarrow\cg'$.
\end{definition}

The importance of our next lemma is to provide a translation between complexes of groups and their B.H. categories. Each morphism of complexes of groups induces a functor between their B.H. categories that carries the same data and conversely each functor between B.H. categories induces a morphism of complexes of groups that carries the same data. 

\begin{lemma}[cf, {\cite[III.$\scat$ 2.8 p.538]{MR1744486}}] \label{onetwo}
\label{mor to func}
The procedure of constructing B.H. categories from complexes of groups induces a bijection between morphism of complexes of groups and functors between their B.H. categories. 
\end{lemma}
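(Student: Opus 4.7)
The plan is to exhibit the two assignments explicitly and verify that they are mutually inverse. The structural key is the factorization
$$
(g,\alpha) \;=\; (g,\mathrm{id}_{t(\alpha)})\cdot (1,\alpha)
$$
of every arrow of $\bh$, which separates any functor $\widehat{\phi}:\bh\to\bh'$ into its behavior on ``pure group arrows'' $(g,\mathrm{id}_\sigma)$ and ``pure scwol arrows'' $(1,a)$; a morphism of complexes of groups turns out to be exactly the data needed to specify these two pieces.

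For the forward direction, given $\phi=(\phi_\sigma,\phi(a))$ over $f:\gs\to\gs'$, I would define $\widehat{\phi}$ to agree with $f$ on objects and to send
$$
(g,\alpha)\;\longmapsto\;\bigl(\phi_{t(\alpha)}(g)\,\phi(\alpha),\;f(\alpha)\bigr).
$$
Checking functoriality against the B.H.\ composition $(g,a)(h,b)=(g\,\psi_a(h)\,g_{a,b},\,ab)$ produces an equality whose $h$-dependent part reduces to the conjugation axiom $\mathrm{Ad}(\phi(a))\,\psi'_{f(a)}\,\phi_{i(a)}=\phi_{t(a)}\,\psi_a$ of Definition~\ref{d: morph of complexes of groups}, and whose remaining twist part reduces to the compatibility relation $\phi_{t(a)}(g_{a,b})\phi(ab)=\phi(a)\,\psi'_{f(a)}(\phi(b))\,g'_{f(a),f(b)}$. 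Thus functoriality of $\widehat{\phi}$ is exactly the conjunction of the two morphism-of-complexes-of-groups axioms.

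For the reverse direction, given a functor $F:\bh\to\bh'$, I would set $f(\sigma):=F(\sigma)$ on objects. Because $\gs'$ is a scwol, its only morphisms from $f(\sigma)$ to itself are identities, so $F(g,\mathrm{id}_\sigma)$ must take the form $(\phi_\sigma(g),\mathrm{id}_{f(\sigma)})$ for a unique $\phi_\sigma(g)\in G'_{f(\sigma)}$; functoriality restricted to the submonoid $\{(g,\mathrm{id}_\sigma):g\in G_\sigma\}$, which is group-isomorphic to $G_\sigma$, upgrades $\phi_\sigma$ to a homomorphism. For each $a\in\Arr(\gs)$, writing $F(1,a)=(\phi(a),f(a))$ reads off both the value of $f$ on arrows and the twist element $\phi(a)$. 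Applying $F$ to the three compositions $(g,\mathrm{id})(1,a)=(g,a)$, $(1,a)(g,\mathrm{id})=(\psi_a(g),a)$, and $(1,a)(1,b)=(g_{a,b},ab)$, and then comparing with the composition rule in $\bh'$, forces precisely the two axioms of Definition~\ref{d: morph of complexes of groups}.

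Mutual inversion is then immediate: the two constructions agree on pure group arrows and on pure scwol arrows by construction, so the factorization above extends the agreement to every arrow. I expect the main obstacle to be nothing more than careful bookkeeping with the twisting elements $g_{a,b}$; the cocycle condition on the $g_{a,b}$ (which is what makes B.H.-composition associative in the first place) handles any associativity issues, so no identities beyond those already in Definition~\ref{d: morph of complexes of groups} and in the construction of $\bh$ need to be established.
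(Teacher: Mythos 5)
Your proof is correct: the factorization $(g,\alpha)=(g,\mathrm{id}_{t(\alpha)})(1,\alpha)$, the verification that functoriality of $\widehat{\phi}$ is equivalent to the two axioms of Definition~\ref{d: morph of complexes of groups}, and the use of the scwol axiom (no non-identity endomorphisms in $\gs'$) to force $F(g,\mathrm{id}_\sigma)$ to land in the local group over $F(\sigma)$ are all exactly the right ingredients. The paper itself offers no proof of this lemma, deferring entirely to the cited passage of Bridson--Haefliger; your argument is a faithful reconstruction of that standard one, so there is nothing to contrast.
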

We introduce the trivial scwol which aids us in subsequent definitions. 
\begin{definition}
The \textbf{trivial scwol} $\mathfrak{P}$ is a scwol with one object and no arrows. 
\end{definition}
Given a group $G$ we construct the following complex of groups,
\begin{definition}
Let $G$ be a group. The \textbf{complex of groups constructed from the group} $G$ is the (unique!) complex of groups $\Theta(G)$ over $\mathfrak{P}$ with local group $G$.
\end{definition}

Let $\cg$ be a complex of groups, $G$ a group and $\phi:\cg \rightarrow \Theta(G)$ a morphism. Due to the lack of composable edges in $\Theta(G)$ the definition of a morphism becomes (cf. \cite[III.$\scat$ 2.5 p.537]{MR1744486})

\begin{definition} \label{complex of groups morphism}
A \textbf{morphism} $\phi = (\phi_{\sigma}, \phi(a))$ from a complex of groups $\Lambda$ to $\Theta(G)$ consists of a homorphism $\phi_{\sigma}:G_{\sigma}\rightarrow G$ for each $\sigma\in \Obj(\Lambda)$ and an element $\phi(a)\in G$ for each $a\in \Arr(\Lambda)$ such that 
$$
\phi_{t(a)}\psi_a = \mathrm{Ad}\big(\phi(a)\big)\phi_{i(a)} \hspace{.1in}\text{ and }\hspace{.1in} \phi_{t(a)}(g_{a,b})\phi(ab) = \phi(a)\phi(b)
$$
\end{definition}

Let $\cg$ be a complex of groups over a scwol $\gs$, $\sigma\in \Obj(\gs)$, $\bh$ the B.H. category and $\sigma'\in \Obj(\bh)$ the object constructed from $\sigma$. Each local group $G_{\sigma}$ of $\cg$ corresponds to a full subcategory over each $\sigma'\in \bh$. We make this precise.  

\begin{definition}
Let $\cg$ be a complex of groups over a scwol $\gs$ and $\bh$ the B.H category with $\sigma\in\Obj(\bh)$. Then $S_{\sigma}$ is the full subcategory of $\bh$ over $\sigma$.
\end{definition}

For any group $G$ we can construct the following category,

\begin{definition} \label{groupCat}
Let $G$ be a group. The \textbf{category constructed from the group} $\widehat{G}$ is defined by taking,
\begin{itemize}
    \item $\Obj(\widehat{G}) = \tau_{G}$
    \item $\Arr(\widehat{G}) = \big\lbrace \alpha_G(g) : g\in G\big\rbrace$ 
    \item With composition defined,
    $$
    \alpha_G(g)\alpha_G(h) = \alpha_G(gh)
    $$
\end{itemize}
\end{definition}
Observe that for a group $G$ we obtain, 
\begin{lemma} \label{bh to hat}
Let $G$ be a group, $\Theta(G)$ the complex of groups construct from the group, $\mathcal{C}\Theta(G)$ the B.H. category and $\widehat{G}$ the category constructed from the group. Then $\mathcal{C}\Theta(G)$ and $\widehat{G}$ are isomorphic.
\end{lemma}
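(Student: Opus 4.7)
The plan is to write down an explicit functor between the two categories and check that it is an isomorphism. Because the trivial scwol $\mathfrak{P}$ has exactly one object $*$ and no non-identity arrows, both categories have a single object, so all the content is in matching up the arrow sets and the composition rule.

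First I would unpack $\mathcal{C}\Theta(G)$. Since $\Obj(\mathfrak{P}) = \{*\}$, we have $\Obj(\mathcal{C}\Theta(G)) = \{*\}$. The only morphism in $\mathfrak{P}$ is $\mathrm{id}_{*}$, so $\Arr(\mathcal{C}\Theta(G)) = \{(g, \mathrm{id}_*) : g \in G\}$. By the conventions for scwols, the injective homomorphism attached to the identity arrow is the identity of the local group, and the cocycle condition forces the twisting elements $g_{\mathrm{id}, \mathrm{id}}$ to be trivial; plugging these into the composition rule gives
\[
(g, \mathrm{id}_*)(h, \mathrm{id}_*) = (g\, \psi_{\mathrm{id}}(h)\, g_{\mathrm{id}, \mathrm{id}},\, \mathrm{id}_*) = (gh, \mathrm{id}_*).
\]

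Next I would define $F \colon \mathcal{C}\Theta(G) \to \widehat{G}$ by $F(*) = \tau_G$ on objects and $F(g, \mathrm{id}_*) = \alpha_G(g)$ on arrows. Functoriality is immediate: identities go to identities because $F(e, \mathrm{id}_*) = \alpha_G(e)$ is the identity of $\widehat{G}$, and composition is preserved by comparing $F\big((g, \mathrm{id}_*)(h, \mathrm{id}_*)\big) = \alpha_G(gh)$ with $\alpha_G(g)\alpha_G(h) = \alpha_G(gh)$ using the composition rule in Definition \ref{groupCat}.

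Finally I would observe that $F$ is a bijection on both objects (singletons on each side) and on arrows (the map $g \mapsto \alpha_G(g)$ is a bijection $G \to \Arr(\widehat{G})$ by construction), so $F$ is an isomorphism of categories. The only mild subtlety, and the main step to handle carefully, is the verification that $\psi_{\mathrm{id}} = \mathrm{id}_G$ and $g_{\mathrm{id},\mathrm{id}} = e$ so that the composition in $\mathcal{C}\Theta(G)$ really reduces to multiplication in $G$; everything else is immediate from the definitions.
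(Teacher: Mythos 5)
Your proposal is correct and follows essentially the same route as the paper: the paper leaves the lemma as an observation and then defines the ``B.H.\ to hat map'' $\psi_G$ sending $\sigma\mapsto\tau_G$ and $(g,\mathbbm{1}_{\sigma})\mapsto\alpha_G(g)$, which is exactly your functor $F$. Your extra care in checking that $\psi_{\mathrm{id}}=\mathrm{id}_G$ and that the twisting element on identities is trivial, so that composition in $\mathcal{C}\Theta(G)$ reduces to multiplication in $G$, is a welcome detail the paper omits.
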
 
\begin{definition}
Let $G$ be a group, $\Theta(G)$ the complex of groups constructed from $G$, $\mathcal{C}\Theta(G)$ the B.H. category and $\widehat{G}$ the category constructed from $G$. The \textbf{B.H. to hat map} is the functor that realizes the isomorphism of Lemma \ref{bh to hat} and is given by,
$$
\psi_{G}:\mathcal{C}\Theta(G) \rightarrow \widehat{G}
$$
such that,
\begin{itemize}
    \item for $\sigma \in \Obj(\mathcal{C}\Theta(G))$, $\psi_G(\sigma) = \tau_G$; and
    \item for $(g,\mathbbm{1}_{\sigma})\in \Arr(\mathcal{C}\Theta(G))$, $\psi_G((g,\mathbbm{1}_{\sigma})) = \alpha_G(g)$.
\end{itemize}
\end{definition}

Let $\cg$ be a complex of groups over a scwol $\gs$, $G_{\sigma}$ a local group of $\cg$ with $\sigma\in \Obj(\gs)$, $\bh$ the B.H. category and $S_{\sigma}\subseteq \bh$ the full subcategory over $\sigma\in \Obj(\bh)$. We get,

\begin{lemma} \label{L: taut natural iso}
For $\sigma\in \Obj(\bh)$,
$$
S_{\sigma} \cong \scat\Theta(\mathrm{Aut}_{\bh}(\sigma)) \cong \scat\Theta(G_{\sigma}) \cong \widehat{G}_{\sigma}
$$
\end{lemma}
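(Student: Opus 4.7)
The plan is to establish the three isomorphisms in sequence, with the first being the substantive step and the other two following from previously stated lemmas.

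First, I would unwind the arrows of $S_{\sigma}$ using the scwol axioms. The full subcategory $S_{\sigma}$ has $\sigma$ as its only object, so its arrows are the $(g,\alpha)\in \Arr(\bh)$ with $i(\alpha) = t(\alpha) = \sigma$. The second scwol axiom forces any such $\alpha$ to be $\idCat{\sigma}$. Thus $\Arr(S_{\sigma}) = \{(g,\idCat{\sigma}) : g \in G_{\sigma}\}$. Computing composition in $\bh$ with $\alpha = \beta = \idCat{\sigma}$ gives $\psi_{\idCat{\sigma}} = \mathrm{id}_{G_{\sigma}}$ and $g_{\idCat{\sigma},\idCat{\sigma}} = e$, so $(g,\idCat{\sigma})(h,\idCat{\sigma}) = (gh, \idCat{\sigma})$. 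Therefore $g \mapsto (g,\idCat{\sigma})$ is a group isomorphism $G_{\sigma} \xrightarrow{\cong} \mathrm{Aut}_{\bh}(\sigma)$.

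Having identified $\mathrm{Aut}_{\bh}(\sigma)$, the first isomorphism $S_{\sigma} \cong \scat\Theta(\mathrm{Aut}_{\bh}(\sigma))$ is almost tautological: both categories have a single object, and both have arrow set equal to the underlying set of $\mathrm{Aut}_{\bh}(\sigma)$ with composition equal to group multiplication in $\mathrm{Aut}_{\bh}(\sigma)$, so the identity-on-arrows assignment is an isomorphism of categories. The second isomorphism $\scat\Theta(\mathrm{Aut}_{\bh}(\sigma)) \cong \scat\Theta(G_{\sigma})$ then follows by applying the functor $\scat\Theta$ to the group isomorphism $\mathrm{Aut}_{\bh}(\sigma) \cong G_{\sigma}$ established above; concretely, the induced functor sends the unique object to itself and $(g,\idCat{}) \mapsto ((g,\idCat{\sigma}),\idCat{})$.

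Finally, the third isomorphism $\scat\Theta(G_{\sigma}) \cong \widehat{G}_{\sigma}$ is exactly the content of Lemma \ref{bh to hat}, realized by the B.H.\ to hat map $\psi_{G_{\sigma}}$. Composing the three isomorphisms gives the statement. The main obstacle is genuinely just the first step, which requires correctly reading the scwol self-loop axiom to see that the full subcategory over $\sigma$ collapses onto the endomorphism monoid of the identity, so that all the nontrivial combinatorics of $\gs$ disappears and only the local group data remains; once this is in hand, the remaining chain is bookkeeping via the lemmas already provided.
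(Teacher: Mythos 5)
The paper states this lemma without proof, so there is no argument of record to compare against; your proposal is correct and supplies exactly the verification one would expect. The one point worth flagging is your appeal to $\psi_{\idCat{\sigma}} = \mathrm{id}_{G_{\sigma}}$ and $g_{\idCat{\sigma},\idCat{\sigma}} = e$ when computing the composition $(g,\idCat{\sigma})(h,\idCat{\sigma}) = (gh,\idCat{\sigma})$: Definition \ref{d: complexes of groups} as written does not explicitly impose these normalizations on identity arrows (in Bridson--Haefliger's own formulation the scwol carries no identity arrows, so the issue does not arise there), so you are implicitly invoking the standard convention that the structure maps attached to identities are trivial. Granting that convention, the rest of your argument --- the second scwol axiom forcing $\alpha = \idCat{\sigma}$ for any self-arrow $(g,\alpha)$ at $\sigma$, the resulting group isomorphism $G_{\sigma} \cong \mathrm{Aut}_{\bh}(\sigma)$, the transport of that isomorphism through $\scat\Theta(-)$, and the final identification $\scat\Theta(G_{\sigma}) \cong \widehat{G}_{\sigma}$ via Lemma \ref{bh to hat} --- is correct, and it is the natural route to the statement.
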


From Lemma \ref{L: taut natural iso} we define,
\begin{definition}\label{full subcat to BH local group}
The \textbf{full subcategory to B.H. category functor} is,
$$
\Upsilon_{S_{\sigma}}:S_{\sigma}\rightarrow \mathcal{C}\Theta(G_{\sigma})
$$
\end{definition}

Let $\cg$ be a complex of groups over a scwol $\gs$, $G$ a group. for $\sigma\in \Obj(\gs)$ and $a\in \Arr(\gs)$ the map $\phi = (\phi_{\sigma}, \phi(a)):\cg\rightarrow \Theta(G)$ is a morphism of complexes of groups with local group homomorphism $\phi_{\sigma}:G_{\sigma}\rightarrow G$. By Lemma \ref{induced functor}, $\phi$ induces the functor $\widehat{\phi}:\bh\rightarrow\mathcal{C}\Theta(G)$ and the map $\phi_{\sigma}$ induces the functor $\widehat{\phi}_{\sigma}:\mathcal{C}\Theta(G_{\sigma})\rightarrow \mathcal{C}\Theta(G)$. Let $\psi_G: \mathcal{C}\Theta(G)\rightarrow \widehat{G}$ be the B.H. to hat map from Lemma \ref{bh to hat} and $\Upsilon_{S_{\sigma}}:S_{\sigma}\rightarrow \mathcal{C}\Theta(G_{\sigma})$ the full subcategory to B.H. category functor from Definition \ref{full subcat to BH local group}. We define the following two functors, 

\begin{definition}\label{cathatmap}
The \textbf{cat to hat map} is,
\begin{itemize}
    \item[] 
    $$\chat = \widehat{\psi}_{G}\circ \widehat{\phi}:\bh\rightarrow \mathcal{C}\Theta(G) \rightarrow\widehat{G}
    $$
\end{itemize}
For $\sigma\in \Obj(\bh)$ the \textbf{local cat to hat map} is,
\begin{itemize}
    \item[] 
    $$
    \lchat = \widehat{\psi}_{G}\circ \widehat{\phi}_{\sigma}\circ \widehat{\Upsilon}_{S_{\sigma}}:S_{\sigma}\rightarrow \mathcal{C}\Theta(G_{\sigma})\rightarrow\widehat{G}_{\sigma} \rightarrow \widehat{G}
    $$
\end{itemize}
\end{definition}
Observe that by restricting $\bh$ to the full subcategory $S_{\sigma}$ over $\sigma\in \Obj(\bh)$ we get,

$$
\chat|_{S_{\sigma}} = \lchat 
$$
\vspace{.2in}

The definition of a group $G$ acting of a scwol $\mathcal{X}$ is stronger than the group of automorphisms $\mathrm{Aut}(\mathcal{X})$ of $\mathcal{X}$ so that the quotient category $G\setminus\mathcal{X}$ is always a scwol. See \cite[III.$\scat$ 1.11 p.528]{MR1744486} for the definition of a group $G$ acting on a scwol $\mathcal{X}$. The most natural way to build a complex of groups $\cg = (\mathcal{Y}, G_{\sigma},\psi_a, g_{a,b})$ is to construct $\cg$ over a quotient scwol $\mathcal{Y} = G\setminus \mathcal{X}$. Here the local groups $G_{\sigma}$ of $\cg$ are stabilizers in $G$ of objects in $\mathcal{X}$. Furthermore, when a complex of groups $\cg$ is the result of a global quotient we say that $\cg$ is developable. We give the precise definition below.  

\begin{definition}
A complex of groups $\Lambda = (\mathcal{Y}, G_{\sigma},\psi_a, g_{a,b})$ is called \textbf{developable} if it is isomorphic to a complex of groups that is associated to an action of a group $G$ on a scwol $\mathcal{X}$ with $\mathcal{Y} = G\setminus \mathcal{X}$ the quotient scwol. That is, $\mathcal{Y}$ is the scwol for $\Lambda$.
\end{definition}
Not all complexes of groups are developable. See Example (5) \cite[II.12.17 (5) p.378]{MR1744486} for an enxample of a non-developable complex of groups.

In \cite[III.$\scat$ 2.15 p.544]{MR1744486} Bridson and Haefliger give the following characterization of developability of a complex of groups.

\begin{theorem}\label{key theorem}
A complex of groups $\cg$ is developable if and only if there exists a group $G$ and a morphism $\phi:\cg \rightarrow \Theta(G)$ which is injective on local groups.
\end{theorem}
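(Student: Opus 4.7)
The plan is to prove the two directions separately, with the forward direction an essentially tautological unpacking of what it means to be developable, and the reverse direction requiring an explicit construction of the so-called \emph{development}.

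For the forward implication, assume $\cg = (\gs, G_\sigma, \psi_a, g_{a,b})$ arises from an action of a group $G$ on a scwol $\mathcal{X}$ with $\gs = G \setminus \mathcal{X}$. First I would choose, for each $\sigma \in \Obj(\gs)$, a lift $\tilde{\sigma} \in \Obj(\mathcal{X})$, and for each $a \in \Arr(\gs)$, a lift $\tilde{a} \in \Arr(\mathcal{X})$ with $i(\tilde{a}) = \widetilde{i(a)}$. By the construction of $\cg$ from the action, the local group $G_\sigma$ is precisely the stabilizer in $G$ of $\tilde{\sigma}$, so the tautological inclusion $\phi_\sigma : G_\sigma \hookrightarrow G$ is an injection. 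Because $t(\tilde{a})$ and $\widetilde{t(a)}$ both project to $t(a)$, there is a unique $\phi(a) \in G$ moving one to the other. The identities $\mathrm{Ad}(\phi(a))\phi_{i(a)} = \phi_{t(a)}\psi_a$ and $\phi_{t(a)}(g_{a,b})\phi(ab) = \phi(a)\phi(b)$ required by Definition \ref{complex of groups morphism} then follow directly from how $\psi_a$ and $g_{a,b}$ were read off from the action in the first place.

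For the reverse implication, given $\phi = (\phi_\sigma, \phi(a)) : \cg \to \Theta(G)$ that is injective on local groups, I would build a candidate development $\mathcal{X}$ with
\[
\Obj(\mathcal{X}) \;=\; \bigsqcup_{\sigma \in \Obj(\gs)} G / \phi_\sigma(G_\sigma),
\]
with $G$ acting by left multiplication on each factor, and with one arrow in $\mathcal{X}$ for each pair $(g, a)$ with $a \in \Arr(\gs)$ and $g$ a coset representative, oriented so that its initial and terminal objects lie in the appropriate coset spaces over $i(a)$ and $t(a)$, modulo the action of $\phi_{i(a)}(G_{i(a)}) \cap g^{-1}\phi_{t(a)}(G_{t(a)})g$. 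Composition of $(g,a)$ with $(h,b)$ would be defined by $(g\,\phi(a)\,h,\,ab)$, whose well-definedness uses injectivity on local groups together with the two morphism identities. Then I would check: (i) $\mathcal{X}$ is a scwol, (ii) $G$ acts on $\mathcal{X}$ in the strong sense of \cite[III.$\scat$ 1.11]{MR1744486}, (iii) the canonical map $\mathcal{X} \to \gs$ realizes $\gs$ as $G \setminus \mathcal{X}$, and (iv) the stabilizers in $G$ of the distinguished lifts $e\phi_\sigma(G_\sigma)$ recover the $G_\sigma$ with the original $\psi_a$ and $g_{a,b}$, so that the complex of groups obtained from the action is isomorphic to $\cg$.

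The main obstacle will be verifying that composition in $\mathcal{X}$ is associative, i.e.\ that $\mathcal{X}$ is genuinely a category and in particular a scwol. Associativity is exactly where one uses the cocycle condition $\psi_a(g_{b,c})g_{a,bc} = g_{a,b}g_{ab,c}$ on $\cg$ together with the relation $\phi_{t(a)}(g_{a,b})\phi(ab) = \phi(a)\phi(b)$ on $\phi$; pushing three composable arrows through two different parenthesizations must land on the same element of $G$, and this collapses precisely when both coherence identities are fed into the computation. Once associativity is in hand, the remaining scwol axioms $i(ab) \ne t(ab)$ and identity-only fixed arrows transfer from $\gs$ to $\mathcal{X}$, and the identification $\gs \cong G \setminus \mathcal{X}$ together with the stabilizer computation for $\phi_\sigma$ finishes the argument because $\phi_\sigma$ is injective, which guarantees that the quotient recovers $\cg$ on the nose rather than some quotient of it.
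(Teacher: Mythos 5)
The paper offers no proof of this statement: it is quoted directly from Bridson--Haefliger \cite[III.$\scat$ 2.15 p.544]{MR1744486} and used as a black box (it is precisely the bridge that converts the sheaf-theoretic criterion into developability). So the only meaningful comparison is with the standard argument via the basic construction, and that is exactly what you are reconstructing: the forward direction by choosing lifts and reading off the tautological morphism to $G$, the reverse direction by building the development with object set $\bigsqcup_{\sigma} G/\phi_{\sigma}(G_{\sigma})$ and checking that $G$ acts with quotient $\gs$ and the correct stabilizers. Your forward direction is fine, and your closing observation --- that injectivity of $\phi_{\sigma}$ is what makes the stabilizer of the distinguished lift $e\phi_{\sigma}(G_{\sigma})$ isomorphic to $G_{\sigma}$ rather than to $G_{\sigma}/\ker\phi_{\sigma}$ --- is the one place injectivity is genuinely used.

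Two points need repair. First, your description of the arrow set is off: for the projection $\mathcal{X}\to\gs$ to be non-degenerate (which is what forces the complex of groups associated to the action to have the same underlying scwol and the same structure maps $\psi_a$ as $\cg$), the arrows over $a$ must be in bijection with the objects over the initial vertex of $a$, i.e.\ indexed by cosets of $\phi_{i(a)}(G_{i(a)})$ alone, with the arrow through $g\phi_{i(a)}(G_{i(a)})$ attached at its other end to the coset obtained by translating by $\phi(a)$. The intersection $\phi_{i(a)}(G_{i(a)})\cap g^{-1}\phi_{t(a)}(G_{t(a)})g$ that you quotient by only collapses to $\phi_{i(a)}(G_{i(a)})$ when the conjugating element is $\phi(a)$ and the compatibility $\mathrm{Ad}(\phi(a))$ between $\phi_{i(a)}$, $\phi_{t(a)}$ and $\psi_a$ is invoked; as written, with a generic coset representative $g$ in place of $\phi(a)$, it is the wrong subgroup and would produce several lifts of $a$ at a single lifted initial vertex. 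Second, you claim that well-definedness and associativity of the composition $(g,a)(h,b)=(g\,\phi(a)\,h,ab)$ use injectivity on local groups; they do not. The development is a well-defined scwol with a $G$-action for an \emph{arbitrary} morphism $\phi:\cg\to\Theta(G)$ --- associativity follows from the cocycle condition and the relation $\phi_{t(a)}(g_{a,b})\phi(ab)=\phi(a)\phi(b)$ alone --- and injectivity enters only in the final stabilizer computation. Misplacing it there suggests the construction would break without it, which would make the ``only if'' direction circular; keeping the two roles separate is what makes the argument clean.
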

We make a definition so that we may phrase developability of a complex of groups in terms of the group $G$.
\begin{definition}
Let $\cg$ be a developable complex of groups and $G$ a group such that there exists a morphism $\phi:\cg \rightarrow \Theta(G)$ that is injective on local groups. Then we say that $G$ \textbf{witnesses the developability} of $\cg$.  
\end{definition}

In case of an ordinary complex take $G = \lbrace 1 \rbrace$ (or any other groups like $\GL(V)$, with all homomorphisms trivial). So although the morhpism $\phi:\pi_1(\cg) \rightarrow G$ is injective on local groups, $\phi$ needn't be injecitive. 

In the next section we define sheaves. 

\section{Sheaves}
We restate our main definition,
\Sheaf*

The B.H. category is constructed to carry the data of a complex of groups. When $\cg$ is a complex of groups a \textit{sheaf over }$\cg$ will mean a sheaf over $\bh$. 
\vspace{.2in}

We define sheaf constructions that we use later in this paper. 
\begin{definition}
Let $\mathcal{C}$, $\mathcal{C}'$ be categories and $\phi:\mathcal{C}\rightarrow\mathcal{C}'$ a functor. Let $\mathcal{F}$ be a sheaf on $\mathcal{C}'$. We define the \textbf{pullback sheaf } $\phi^*\mathcal{F}$ on $\scat$ to be. 
$$
\phi^*\mathcal{F} = \mathcal{F}\circ \phi
$$
\end{definition} 
\begin{definition}
Let $\mathcal{F}$ be a sheaf over a category $\scat$ and let $n\in \mathbb{N}$. If for each $\sigma\in \Obj(\scat)$ we have $\dim(\mathcal{F}(\sigma)) = n$ then $\mathcal{F}$ has \textbf{constant rank}.
\end{definition}

Let $\cg$ be a complex of groups and $\bh$ the B.H. category. Since the full subcategory $S_{\sigma}$ over $\sigma\in \Obj(\bh)$ carries data of the local group $G_{\sigma}$ of $\cg$ we obtain a sheaf version of Definition \ref{d: inj local complex groups}.
\begin{definition}
Let $\scat$ be a category and $\mathcal{F}$ a sheaf over $\scat$. If for each $\sigma\in \Obj(\scat)$ the restriction $\mathcal{F}|_{S_{\sigma}}:S_{\sigma} \rightarrow \mathbb{K}\mathrm{-Vect}$ is injective then $\mathcal{F}$ is said to be \textbf{injective on local groups}.
\end{definition}
We summarize the previous sheaf properties into a single definition along with one more property.
\begin{definition}
Let $\mathcal{F}$ be a sheaf. We say that $\mathcal{F}$ satisfies the \textbf{dev properties} if,
\begin{itemize}
    \item $\mathcal{F}$ has constant rank;
    \item if for each $\lambda\in \mathrm{Mor}(\bh)$, $\mathcal{F}(\lambda)$ is invertible; and if
    \item $\mathcal{F}$ is injective on local groups.
\end{itemize}
\end{definition}

Observe that when we construct a category from a group $G$ we obtain,

\begin{example}
Let $G$ be a group, $\widehat{G}$ the category constructed from $G$ as in Definition \ref{groupCat}. Then a sheaf $\mathcal{F}$ over $\widehat{{G}}$ is a linear representation of $G$.  
\end{example}

Let $\GL(V)$ be the general linear group over the vector space $V$ and let $\widehat{\GL(V)}$ be the category constructed from $\GL(V)$. 
\begin{definition}
The \textbf{tautological sheaf} is the sheaf $\mathcal{Z}_{taut}^V:\widehat{\GL(V)}\rightarrow \mathbb{K}\mathrm{-Vect}$ so that,
\begin{itemize}
    \item $\mathcal{Z}_{taut}^V\big(\tau_{\GL(V)}\big) = V$ for $\tau_{\GL(V)}\in \Obj(\widehat{\GL(V)})$; and
    \item $\mathcal{Z}^V_{taut}\big(\alpha_{\GL(V)}(\rho)\big) = \rho$ for $\alpha_{\GL(V)}(\rho)\in\Arr(\widehat{\GL(V)})$ 
\end{itemize}. 
\end{definition}

Recall that $\GL(V)\subset \Arr(\mathbb{K}\mathrm{-Vect})$ so $\alpha_{\GL(V)}(\rho)\in \Arr(\widehat{\GL(V)})$ where $\rho\in\GL(V)$. When we pass $\alpha_{\GL(V)}(\rho)$ through the tautological sheaf $\mathcal{Z}_{taut}^V$ we recover the group element $\mathcal{Z}^V_{taut}\big(\alpha_{\GL(V)}(\rho)\big) = \rho\in \GL(V)$.


\begin{lemma}\label{onedirection}
Let $\cg$ be a developable complex of groups such that $\GL(V)$ witnesses the developability of $\cg$, let $\phi:\cg\rightarrow \Theta(\GL(V))$ be a morphism that is injective on local groups and $\chat$ the cat to hat map of Definition \ref{cathatmap}. Then $\chat\circ \mathcal{Z}^V_{taut}$ is a sheaf that satisfies the dev-properties.
\end{lemma}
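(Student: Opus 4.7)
The plan is to verify the three dev properties one at a time, after recognising $\chat\circ \mathcal{Z}^V_{taut}$ (read as the pullback $\chat^*\mathcal{Z}^V_{taut}=\mathcal{Z}^V_{taut}\circ\chat$) as a composition of functors $\bh\to\widehat{\GL(V)}\to\mathbb{K}\mathrm{-Vect}$, which is immediately a sheaf in the sense of Definition \ref{D: sheaf}. Denote $\mathcal{F}:=\mathcal{Z}^V_{taut}\circ\chat$.

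For constant rank, I would use that $\widehat{\GL(V)}$ has a single object $\tau_{\GL(V)}$, so for every $\sigma\in\Obj(\bh)$ one has $\chat(\sigma)=\tau_{\GL(V)}$ and hence $\mathcal{F}(\sigma)=\mathcal{Z}^V_{taut}(\tau_{\GL(V)})=V$; taking $n=\dim V$ gives constant rank. For invertibility of morphisms, for each $\lambda\in\mathrm{Mor}(\bh)$ the image $\chat(\lambda)$ is by construction an arrow $\alpha_{\GL(V)}(\rho)$ of $\widehat{\GL(V)}$ with $\rho\in\GL(V)$, and then $\mathcal{F}(\lambda)=\mathcal{Z}^V_{taut}(\alpha_{\GL(V)}(\rho))=\rho$, which is invertible by the very definition of $\GL(V)$.

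The main technical step is injectivity on local groups. For fixed $\sigma$ I would use the identity $\chat|_{S_\sigma}=\lchat$ observed after Definition \ref{cathatmap}, so that
$$
\mathcal{F}|_{S_\sigma}=\mathcal{Z}^V_{taut}\circ\widehat{\psi}_{\GL(V)}\circ\widehat{\phi}_\sigma\circ\widehat{\Upsilon}_{S_\sigma}.
$$
By Lemma \ref{L: taut natural iso} the functor $\widehat{\Upsilon}_{S_\sigma}$ is an isomorphism, and by Lemma \ref{bh to hat} the functor $\widehat{\psi}_{\GL(V)}$ is an isomorphism, so injectivity reduces to checking that $\mathcal{Z}^V_{taut}\circ\widehat{\phi}_\sigma$ is injective as a functor. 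The hypothesis that $\phi$ is injective on local groups (Definition \ref{d: inj local complex groups}) says that $\phi_\sigma:G_\sigma\hookrightarrow\GL(V)$ is an injective group homomorphism; under the bijection of Lemma \ref{onetwo} this is precisely the statement that $\widehat{\phi}_\sigma$ is injective on arrows. Finally, $\mathcal{Z}^V_{taut}$ sends distinct arrows $\alpha_{\GL(V)}(\rho)$ and $\alpha_{\GL(V)}(\rho')$ to distinct linear maps $\rho\neq\rho'$, so it is injective on arrows as well, and the composite of injective functors (on the single-object category $S_\sigma$) is injective. This establishes the third dev property and completes the proof.

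The only place where there is anything to say is the injectivity step, and the work there is just unpacking the factorization of $\lchat$; the first two dev properties are immediate from the definition of $\mathcal{Z}^V_{taut}$.
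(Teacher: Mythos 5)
Your proposal is correct and follows essentially the same route as the paper: constant rank and invertibility both fall out of $\widehat{\GL(V)}$ having one object and arrows labelled by elements of $\GL(V)$, and injectivity on local groups is reduced via the identity $\chat|_{S_\sigma}=\lchat$ to the injectivity of $\phi_\sigma$. The only cosmetic difference is that you factor $\lchat$ into individually injective functors, whereas the paper writes out the explicit chain of equalities showing $\lchat^{\ast}\circ\mathcal{Z}^V_{taut}\bigl((k,\mathbbm{1}_{\sigma})\bigr)=\phi_{\sigma}(k)$; the content is the same.
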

\begin{proof}
The complex of groups $\Theta(\GL(V))$ is constructed over the trivial scowl $\mathfrak{P}$. The trivial scwol has one object and no arrows so the rank of $\widehat{\Psi}^{\ast}\circ \mathcal{Z}^V_{taut}$ is constant and each arrow represents an invertible group element of $\GL(V)$ so all morphisms are invertible. We proceed to prove injectivity on local groups.
\vspace{.1in}

Let $\gs$ be the scwol that $\cg$ is over, $\sigma\in \Obj(\gs)$, $a\in \Arr(\gs)$ and recall that for $\phi = (\phi_{\sigma}, \phi(a))$ the map $\phi_{\sigma}:G_{\sigma}\rightarrow \GL(V)$ is a homomorphism and $\lchat:S_{\sigma}\rightarrow \widehat{\GL(V)}$ is the local cat to hat map from Definition \ref{cathatmap}.

Since $\chat|_{S_{\sigma}} = \lchat$, it suffices to show that for all $\sigma\in \Obj(\bh)$ each $\lchat^{\ast}\circ\mathcal{Z}^V_{taut}$ is injective. 

For $\sigma\in \Obj(\bh)$ and $(k,\mathbbm{1}_{\sigma})\in \Arr(S_{\sigma})$ we have the following chain of equalities,

\begin{itemize}
    \item[(1)] \hspace{.4in} $\phi_{\sigma}(k) = \mathcal{Z}^V_{taut}\big(\alpha_{\GL(V)}(\phi_{\sigma}(k))\big) = \lchat^{\ast}\circ\mathcal{Z}^V_{taut}\big((k,\mathbbm{1}_{\sigma})\big)$.
\end{itemize}

Let $(g,\mathbbm{1}_{\sigma}), (h,\mathbbm{1}_{\sigma})\in \Arr(S_{\sigma})$ so $\lchat^{\ast}\circ\mathcal{Z}^V_{taut}\big((g,\mathbbm{1}_{\sigma})\big) = \lchat^{\ast}\circ\mathcal{Z}^V_{taut}\big((h,\mathbbm{1}_{\sigma})\big)$.  From equation $(1)$ above we get,
\begin{align*}
\phi_{\sigma}(g) &= \mathcal{Z}^V_{taut}\big(\alpha_{\GL(V)}(\phi_{\sigma}(g))\big)\\
    &= \lchat^{\ast}\circ\mathcal{Z}^V_{taut}\big((g,\mathbbm{1}_{\sigma})\big)\\
    &=\lchat^{\ast}\circ\mathcal{Z}^V_{taut}\big((h,\mathbbm{1}_{\sigma})\big)\\
    &= \mathcal{Z}^V_{taut}\big(\alpha_{\GL(V)}(\phi_{\sigma}(h))\big) \\
    &= \phi_{\sigma}(h) 
\end{align*}
We see that $\phi_{\sigma}(g) = \phi_{\sigma}(h)$. Since $\phi$ is injective on local groups then $\phi_{\sigma}$ is injective. It follows that $g = h$ and hence $\lchat^{\ast}\circ \mathcal{Z}^V_{taut}$ is injective. Since the choice of $\sigma \in \Obj(\bh)$ was arbitrary it follows that $\chat^{\ast}\circ \mathcal{Z}^V_{taut}$ is injective on local groups.
\end{proof}
Proving the other direction of Theorem \ref{main} will be the content of the next section.

\section{Proof of Theorem \ref{main}.}
We restate and prove Theorem \ref{main}. The proof technique will be familiar to constructing the fundamental group of a graph via a maximal tree. We take the maximal tree $T$ of a scwol $\gs$ that a complex of groups $\cg$ is defined over and witness how the data interacts when observed through a sheaf. In Subsection \ref{scwol def} we give the technical definitions concerning the scwol $\gs$. In Subsection \ref{les functors} we define functors that will then be to define \textit{sheaf paths} and a morphism of complexes of groups in Subsection \ref{sheafpaths} which allows us to prove Theorem \ref{main} in Subsection \ref{proofA}.  

\subsection{Edge paths and a Tree} \label{scwol def}
In this subsection we define edge paths of a scwol $\mathcal{X}$ and give the definition of a maximal tree $\T$ of $\mathcal{X}$. We use $\T$ in Subsection \ref{sheafpaths} to construct a morphism of complexes of groups. Many of these definitions can be found in or are inspired from \cite[III.$\scat$ 1.6 p.526]{MR1744486}. 
\vspace{.2in}

Let $\gs$ be a scwol and let $E^{\pm}(\gs)$ be the set of symbols $a^+$ and $a^-$, where $a\in \Arr(\gs)$.

\begin{definition}
The element $e\in E^{\pm}$ is an \textbf{oriented edge} of $\gs$. If $e = a^+$ then $i(e) = t(a)$, $t(e) = i(a)$, and if $e = a^-$ we have $i(e) = i(a)$ and $t(e) = t(a)$.  
\end{definition}
\begin{definition}
Let $v,w\in \Obj(\gs)$. An \textbf{edge path} of $\gs$ joining $v$ to $w$ is a sequence of elements in $E^{\pm}(\gs)$, $c = (a_1^{\epsilon},\cdots,a_n^{\epsilon_n})$ with $a_k^{\epsilon_k}\in E^{\pm}(\gs)$, $a_k\in \Arr(\gs)$, $\epsilon_k\in \big\lbrace -1, 1\big\rbrace$, such that for all $1< k < n$ we have $i(a_k^{\epsilon_k}) = t(a_{k-1}^{\epsilon_{k-1}})$ and $i(c) = v$, $t(c) = w$. 

Given $\sigma \in \Obj(\gs)$ the \textbf{constant path} over $\sigma$ is realized by $\mathrm{id}_{\sigma}$.
\end{definition}

\begin{definition}
Let $c = (a_1^{\epsilon},\cdots,a_n^{\epsilon_n})$ be an edge path. We say that \textbf{backtracking} occurs in $c$ if for any sequential edges $a_j,a_{j+1}$ in $c$ we have $a_j = a_{j+1}$ and $\epsilon_j = \epsilon^{-1}_{j+1}$ and \textbf{reduce} $c$ by removing $a_j$ and $a_{j+1}$ from $c$ and forming a new edge path from the remaining edges. If $c$ exhibits no backtracking then $c$ is said to be \textbf{fully reduced}.   
\end{definition} 
\begin{definition}
Let $c = (a_1^{\epsilon},\cdots,a_n^{\epsilon_n})$ be an edge path in $\gs$. We say that $c$ is a \textbf{loop} if $i(c) = t(c)$ and say that $c$ is a \textbf{circuit} if $c$ is also embedded. 
\end{definition}
\begin{definition}
A \textbf{tree} $\T$ is a subset of objects $\Obj(\T)\subseteq \Obj(\gs)$ and a subset of arrows $\Arr(\T) \subseteq \Arr(\gs)$ so that for all $a\in \Arr(\T)$ we have $i(a),t(a)\in \Obj(\T)$, $\T$ is connected, and $\T$ has no circuits. We say that $\T$ is a \textbf{maximal tree} if $\Obj(\T)=\Obj(\gs)$. 
\end{definition}
 \begin{definition}\label{d: utp}
 Let $T\subseteq \gs$ be a tree with $v,w\in \Obj(T)$ and let $p_T(v,w)$ be the unique reduced edge path between $v$ and $w$ in $T$. Then $p_T(v,w)$ is \textbf{unique tree path} between $v$ and $w$.
 \end{definition}
 
 We next define and explain the functors that we use to prove Theorem \ref{main}. 
 \subsection{The Functors}\label{les functors}
 Let $V$, $W$ be vector spaces and $L\in \Arr(\mathbb{K}\mathrm{-Vect})$, $L:V\rightarrow W$ an isomorphism of vector spaces. We construct the categories $\widehat{\GL(V)}$, $\widehat{\GL(W)}$. Then $L$ induces the following functor, 
 \begin{definition}
 The $\widehat{\textbf{L}}$\textbf{-functor} $ \widehat{L}:\widehat{\GL(V)}\rightarrow \widehat{\GL(W)}$ is,
defined by,
\begin{itemize}
    \item  $\L\big(\tau_{\GL(V)}\big) = \tau_{\GL(W)})$; and
    \item $\L\big(\alpha_{\GL(V)}(\rho) \big) = \alpha_{\GL(W)}\big(L\rho L^{-1}\big)$.
\end{itemize}

 \end{definition}
 
 The $\L$-functor induces the following natural transformation,
 \begin{definition}
 The \textbf{L-transformation} $\eta^L_{\GL(V)}:\mathcal{Z}^V_{taut}\rightarrow \L^{\ast}\mathcal{Z}^W_{taut}$ is the map $\eta^L_{\GL(V)} = L$.
 \end{definition}
 It is a quick exercise to show $\widehat{L}$ is indeed a functor. Whats more, it is not hard to show that $\eta^L_{\GL(V)}$ is a \textit{natural isomorphism}. This follows from $L$ being invertible since a natural isomorphism is a natural transformation whose component maps are isomorphisms.

 Let $\cg$ be a complex of groups and $\bh$ the B.H. category. For $\sigma\in \Obj(\bh)$ let $S_{\sigma}\subseteq \bh$ be the full subcategory over $\sigma$ with inclusion map $i_{\sigma}:S_{\sigma}\rightarrow \bh$. Recall that $S_{\sigma} \cong \widehat{G_{\sigma}}$. Let $\mathcal{F}$ be a sheaf over $\cg$. .
 \begin{definition}
 The $\widehat{\textbf{I}_{\sigma}}$\textbf{-functor} $\widehat{I_{\sigma}}:S_\sigma \rightarrow \widehat{\GL(\mathcal{F}(\sigma))}$ is defined by,
 \begin{itemize}
     \item $\widehat{I_{\sigma}}(\sigma) = \tau_{\GL(\mathcal{F}(\sigma))}$; and
     \item $\widehat{I_{\sigma}}((g,\mathbbm{1}_{\sigma})) = \alpha_{\GL(\mathcal{F}(\sigma))}(\mathcal{F}((g,\mathbbm{1}_{\sigma})))$.
 \end{itemize}
 \end{definition}
 Furthermore, let $\mathcal{Z}_{taut}^{\mathcal{F}(\sigma)}$ be the tautological sheaf. It is not difficult to see that that $i^{\ast}_{\sigma}\mathcal{F}$ and $\widehat{I}_{\sigma}^{\ast}\mathcal{Z}_{taut}^{\mathcal{F}(\sigma)}$ are natural isomorphisms and furthermore, 
 $$
 i^{\ast}_{\sigma}\mathcal{F} = \widehat{I}_{\sigma}^{\ast}\mathcal{Z}_{taut}^{\mathcal{F}(\sigma)}.
 $$
 This is summarized in the following diagram,
 $$
 \xymatrix{
 &S_{\sigma} \ar[dr]^{i_{\sigma}}\ar[dl]_{\widehat{I_{\sigma}}} &\\
 \widehat{\GL(\mathcal{F}(\sigma))}\ar[dr]^{\mathcal{Z}_{taut}^{\mathcal{F}(\sigma)}} & &\bh \ar[dl]_{\mathcal{F}}\\
 &\mathbb{K}\mathrm{-Vect} & 
 }
 $$
 \subsection{Sheaf Paths and a Morphism of Complexes of Groups}\label{sheafpaths}
  We utilize the functors from the previous subsection to define paths in the sheaf $\mathcal{F}$ over edge paths in the scwol. We use sheaf paths to define a functor $\widehat{\phi}$ which induces a morphism of complexes of groups. 
  
  The following is an example of an $\widehat{L}$-functor.
 \begin{definition}
 Let $\cg$ be a complex of groups over a scwol $\gs$ with maximal tree $T\subseteq \gs$ and $p_T(v,w) = (a_1^{\epsilon_1}, \cdots, a_n^{\epsilon_n})$ a unique tree path in $T$. Let $\mathcal{F}$ be a sheaf over $\cg$ such that $\mathcal{F}(v) = V$. A \textbf{sheaf path towards} $w$ $M_w:V\rightarrow \mathcal{F}(w)$ is,
 $$
 M_{w} = \prod_{j = 0}^{n-1}\mathcal{F}\big((\mathbf{1}, a_{n-j})\big)^{\epsilon_{n-j}}
 $$
 \end{definition}
 Each sheaf path towards $w$, $M_w$ induces a natural isomorphism $\eta_{\GL(V)}^{M_w}:\mathcal{Z}_{taut}^V\rightarrow \widehat{M}^{\ast}_w\mathcal{Z}_{taut}^{\mathcal{F}(w)}$.

 We now define the \textit{local group functor}.
 
 \begin{definition}\label{lgf}
 Let $\cg = (\gs, G_{\sigma}, \psi_a, g_{a,b})$ be a complex of groups, $\mathcal{F}$ a sheaf over $\cg$, $V$ a vector space, $M_w:V\rightarrow \mathcal{F}(w)$ a sheaf path towards $w$. Let $\sigma\in \Obj(\gs)$. The \textbf{local group functor} $\widehat{\phi_{\sigma}}:\scat\Theta(G_{\sigma})\rightarrow \widehat{\GL(V)}$ is given by,
 \begin{itemize}
     \item $\widehat{\phi_{\sigma}}(\tau_{G_{\sigma}}) = \tau_{\GL(V)}$ for $\tau_{G_{\sigma}}\in \Obj(\scat\Theta(G_{\sigma}))$; and
     \item $\widehat{\phi_{\sigma}}(\alpha_{G_{\sigma}}(g)) = \alpha_{\GL(V)}\big(M^{-1}_{\sigma}\mathcal{F}(g, \mathbbm{1}_{\sigma})M_{\sigma}\big)$ for  $\alpha_{G_{\sigma}}(g) \in \Arr(\scat\Theta(G_{\sigma}))$.
 \end{itemize}
 \end{definition}
 By composing natural isomorphisms it is not hard to see,
 \begin{lemma}
 If $\mathcal{F}$ satisfies the dev-properties then for all $\sigma\in \Obj(\gs)$, $\widehat{\phi_{\sigma}}$ is injective.
 \end{lemma}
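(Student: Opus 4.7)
The plan is to reduce the statement to an injectivity check on arrows. Both $\scat\Theta(G_\sigma)$ and $\widehat{\GL(V)}$ are one-object categories, so the object part of the functor is trivially injective, and I only need to show injectivity of $\widehat{\phi_\sigma}$ on $\Arr(\scat\Theta(G_\sigma))$. Since arrows of $\scat\Theta(G_\sigma)$ are in bijection with $G_\sigma$ via $g \mapsto \alpha_{G_\sigma}(g)$, the goal becomes: for $g, h \in G_\sigma$, if
\[
M^{-1}_\sigma\,\mathcal{F}((g,\mathbbm{1}_\sigma))\,M_\sigma \;=\; M^{-1}_\sigma\,\mathcal{F}((h,\mathbbm{1}_\sigma))\,M_\sigma,
\]
then $g = h$.

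Next I would use the dev-properties in sequence. The second dev-property, that $\mathcal{F}(\lambda)$ is invertible for every $\lambda \in \mathrm{Mor}(\bh)$, guarantees that each factor $\mathcal{F}((\mathbf{1},a_{n-j}))^{\epsilon_{n-j}}$ appearing in the definition of $M_\sigma$ is a well-defined vector space isomorphism; hence $M_\sigma$ itself is invertible. Cancelling $M^{-1}_\sigma$ on the left and $M_\sigma$ on the right yields the equality $\mathcal{F}((g,\mathbbm{1}_\sigma)) = \mathcal{F}((h,\mathbbm{1}_\sigma))$ of linear maps. Now $(g,\mathbbm{1}_\sigma)$ and $(h,\mathbbm{1}_\sigma)$ both lie in $\Arr(S_\sigma)$, and by the third dev-property $\mathcal{F}|_{S_\sigma}$ is an injective functor, so $(g,\mathbbm{1}_\sigma) = (h,\mathbbm{1}_\sigma)$, and consequently $g = h$. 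This gives $\alpha_{G_\sigma}(g) = \alpha_{G_\sigma}(h)$, which is the required injectivity on arrows.

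The first dev-property, constant rank, is used implicitly at the outset: it ensures $\dim \mathcal{F}(\sigma) = \dim V$ so that the conjugation $M^{-1}_\sigma \mathcal{F}((g,\mathbbm{1}_\sigma)) M_\sigma$ actually lands in $\GL(V)$ and $\widehat{\phi_\sigma}$ is defined. There is no serious obstacle here beyond careful bookkeeping; the proof is essentially a translation of the three bullet points of the dev-properties into successive logical steps. The subtle point worth flagging is simply that all three dev-properties are genuinely used: constant rank for well-definedness, invertibility of arrows to cancel the sheaf path, and injectivity on local groups to extract the final conclusion $g=h$.
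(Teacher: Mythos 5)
Your proof is correct and matches the paper's intended argument: the paper only remarks that the claim follows ``by composing natural isomorphisms,'' and your direct verification---cancelling the invertible sheaf path $M_\sigma$ (conjugation by an isomorphism is injective) and then invoking injectivity of $\mathcal{F}|_{S_\sigma}$ on arrows---is exactly that argument written out. Your closing observation about where each dev-property enters is accurate and, if anything, more explicit than the paper.
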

 By focusing on group elements we get,
 \begin{lemma} \label{local group hom}
 If $\mathcal{F}$ satisfies the dev-properties then for all $\sigma\in \Obj(\gs)$, $\widehat{\phi_{\sigma}}$ induces an injective group homomorphism $\phi_{\sigma}:G_{\sigma}\rightarrow \GL(V)$ 
 \end{lemma}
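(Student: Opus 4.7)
The plan is to read off $\phi_\sigma$ directly from the value of $\widehat{\phi_\sigma}$ on arrows. Definition \ref{lgf} sends each arrow $\alpha_{G_\sigma}(g) \in \Arr(\scat\Theta(G_\sigma))$ to $\alpha_{\GL(V)}\bigl(M_\sigma^{-1}\mathcal{F}((g,\mathbbm{1}_\sigma))M_\sigma\bigr)$, so the natural candidate for the induced map is
$$\phi_\sigma(g) = M_\sigma^{-1}\mathcal{F}((g,\mathbbm{1}_\sigma))M_\sigma.$$
I would then verify three things in turn: that the image lies in $\GL(V)$, that $\phi_\sigma$ is multiplicative, and that it is injective.

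Well-definedness into $\GL(V)$ should be immediate from dev-property (ii): every morphism of $\bh$ is sent by $\mathcal{F}$ to an invertible linear map, so $M_\sigma$ (a product of such maps and their inverses along the unique tree path) together with $\mathcal{F}((g,\mathbbm{1}_\sigma))$ are all invertible, and the conjugate therefore lies in $\GL(V)$.

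For multiplicativity I would exploit that $\Theta(G_\sigma)$ lives over the trivial scwol $\mathfrak{P}$, whose only arrow is the identity with $\psi_{\mathbbm{1}_\sigma} = \mathrm{id}$ and twisting element $1$. The B.H. composition rule in $\scat\Theta(G_\sigma)$ therefore degenerates to $(g,\mathbbm{1}_\sigma)(h,\mathbbm{1}_\sigma) = (gh,\mathbbm{1}_\sigma)$, so functoriality of $\mathcal{F}$ yields $\mathcal{F}((gh,\mathbbm{1}_\sigma)) = \mathcal{F}((g,\mathbbm{1}_\sigma))\mathcal{F}((h,\mathbbm{1}_\sigma))$. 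Inserting $M_\sigma M_\sigma^{-1}$ between the two middle factors of $\phi_\sigma(g)\phi_\sigma(h)$ then telescopes to $\phi_\sigma(gh)$.

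Finally, injectivity is almost for free given the preceding lemma: $\widehat{\phi_\sigma}$ is injective as a functor, so in particular injective on arrows. Under the bijection $g \leftrightarrow \alpha_{G_\sigma}(g)$ between $G_\sigma$ and $\Arr(\scat\Theta(G_\sigma))$ furnished by Lemma \ref{L: taut natural iso}, injectivity on arrows transports directly to injectivity of $\phi_\sigma$ on group elements. The main ``obstacle'' is therefore purely administrative, namely keeping track of the chain of identifications $S_\sigma \cong \scat\Theta(G_\sigma) \cong \widehat{G_\sigma}$ so that elements of $G_\sigma$ match arrows in a consistent way; the real content of the lemma has already been absorbed into the previous result.
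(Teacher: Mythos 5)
Your proposal is correct and matches what the paper intends: the paper gives no written proof of this lemma beyond the remark ``by focusing on group elements,'' and your argument (conjugating $\mathcal{F}((g,\mathbbm{1}_\sigma))$ by the invertible sheaf path $M_\sigma$, using dev-property (ii) for invertibility, functoriality plus the degenerate composition $(g,\mathbbm{1}_\sigma)(h,\mathbbm{1}_\sigma)=(gh,\mathbbm{1}_\sigma)$ for multiplicativity, and the preceding injectivity lemma transported along $S_\sigma\cong\scat\Theta(G_\sigma)\cong\widehat{G_\sigma}$) is exactly the natural unpacking of that remark. No gaps.
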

We define the following element of the group $\GL(V)$,
\begin{definition}\label{gel}
Let $a\in \Arr(\gs)$, $g\in G_{t(a)}$, $\mathcal{F}(g,a):\mathcal{F}(i(a))\rightarrow \mathcal{F}(t(a))$, and let $M_{t(a)}:V\rightarrow \mathcal{F}(i(a))$ and $M^{-1}_{t(a)}:\mathcal{F}(t(a))\rightarrow V$ be sheaf paths. We define $\phi_g(a)\in \GL(V)$ as,
$$
\phi_g(a) = M^{-1}_{t(a)}\circ\mathcal{F}(g,a)\circ M_{i(a)}
$$
\end{definition}

\begin{definition}\label{keydef}
Let $T\subseteq \gs$ be a maximal tree. We define the map $\phi = (\phi_{\sigma}, \phi_{\mathbf{1}}(a)):\cg\rightarrow \Theta\big(\GL(V)\big)$ by,
\begin{itemize}
    \item for each $\sigma\in \Obj(T)$, let $\phi_{\sigma}:G_{\sigma}\rightarrow\GL(V)$ be the homomorphism constructed in Definition \ref{lgf}; and
    \item for $a\in \Arr(\gs)$, let $\phi_{\mathbf{1}}(a)\in \GL(V)$ be the construction from Definition \ref{gel}.
\end{itemize}
\end{definition}
We state the key lemma used to prove the other direction of Theorem \ref{main}.
\begin{lemma}[Key Lemma]\label{keylemma}
Let $\phi = (\phi_{\sigma}, \phi_{\mathbf{1}}(a))$ be the map constructed in Definition \ref{keydef}. Then $\phi$ is a morphism of complexes of groups. If furthermore $\mathcal{F}$ satisfies the dev-properties then $\phi$ is injective on local groups.
\end{lemma}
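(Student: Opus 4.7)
The plan is to verify the two algebraic relations from Definition \ref{complex of groups morphism} defining a morphism into $\Theta(\GL(V))$, and then obtain injectivity on local groups from Lemma \ref{local group hom}. Since $\Theta(\GL(V))$ lives over the trivial scwol, there are no twisting elements on the target side, so the relations to check are the conjugation identity $\phi_{t(a)}\psi_a = \mathrm{Ad}(\phi_{\mathbf{1}}(a))\phi_{i(a)}$ and the cocycle identity $\phi_{t(a)}(g_{a,b})\phi_{\mathbf{1}}(ab) = \phi_{\mathbf{1}}(a)\phi_{\mathbf{1}}(b)$.

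For the conjugation identity I would substitute the formulas $\phi_\sigma(g) = M_\sigma^{-1}\mathcal{F}(g,\mathbbm{1}_\sigma)M_\sigma$ from Definition \ref{lgf} and $\phi_{\mathbf{1}}(a) = M_{t(a)}^{-1}\mathcal{F}(\mathbf{1},a)M_{i(a)}$ from Definition \ref{gel} into both sides and expand. On the right-hand side the paths $M_{i(a)}$ and $M_{i(a)}^{-1}$ cancel in the middle while the outer $M_{t(a)}^{\pm 1}$ match those on the left, so the claim reduces to
\begin{equation*}
\mathcal{F}(\psi_a(g),\mathbbm{1}_{t(a)}) = \mathcal{F}(\mathbf{1},a)\,\mathcal{F}(g,\mathbbm{1}_{i(a)})\,\mathcal{F}(\mathbf{1},a)^{-1}
\end{equation*}
inside $\mathrm{End}(\mathcal{F}(t(a)))$. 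This in turn follows by applying the functor $\mathcal{F}$ to the two factorizations of one and the same B.H. arrow, namely $(\mathbf{1},a)(g,\mathbbm{1}_{i(a)}) = (\psi_a(g),a) = (\psi_a(g),\mathbbm{1}_{t(a)})(\mathbf{1},a)$, where both equalities follow from the B.H. composition law together with the triviality of all twisting elements of the form $g_{\cdot,\mathbbm{1}}$ and $g_{\mathbbm{1},\cdot}$.

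The cocycle identity is handled by the same mechanism. After substituting the definitions, the middle paths $M_{i(a)} = M_{t(b)}$ cancel on the right, reducing the claim to $\mathcal{F}(g_{a,b},\mathbbm{1}_{t(a)})\mathcal{F}(\mathbf{1},ab) = \mathcal{F}(\mathbf{1},a)\mathcal{F}(\mathbf{1},b)$, which follows from functoriality of $\mathcal{F}$ applied to the two B.H. factorizations $(\mathbf{1},a)(\mathbf{1},b) = (g_{a,b},ab) = (g_{a,b},\mathbbm{1}_{t(a)})(\mathbf{1},ab)$. The first of these is precisely the composition in which the twisting cocycle $g_{a,b}$ enters, which is what makes this relation nontrivial.

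For the final assertion, once $\mathcal{F}$ satisfies the dev-properties, Lemma \ref{local group hom} says that each $\phi_\sigma\colon G_\sigma \to \GL(V)$ is an injective group homomorphism, which by Definition \ref{d: inj local complex groups} is exactly the statement that $\phi$ is injective on local groups. The main technical obstacle I expect to be bookkeeping: the sheaf paths $M_\sigma$ live in different vector spaces and must be composed in the correct order at the correct vertices, and one must carefully distinguish the identity scwol arrow $\mathbbm{1}_\sigma$ from the identity group element $\mathbf{1}\in G_\sigma$. Once this bookkeeping is in place, every step is forced by the B.H. composition law and the functoriality of $\mathcal{F}$.
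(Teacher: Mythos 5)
Your proposal is correct and follows essentially the same route as the paper: both verify the two identities of Definition \ref{complex of groups morphism} by substituting the formulas for $\phi_\sigma$ and $\phi_{\mathbf{1}}(a)$, cancelling the inner sheaf paths, and reducing to the two B.H. factorizations $(\mathbf{1},a)(g,\mathbbm{1}_{i(a)}) = (\psi_a(g),\mathbbm{1}_{t(a)})(\mathbf{1},a)$ and $(\mathbf{1},a)(\mathbf{1},b) = (g_{a,b},\mathbbm{1}_{t(a)})(\mathbf{1},ab)$, which the paper isolates as Lemmas \ref{L: comp 1.a} and \ref{L: comp 1.b} and you re-derive inline, before invoking Lemma \ref{local group hom} for injectivity. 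The only difference is that the paper additionally treats arrows $a$ in the maximal tree separately to show $\phi_{\mathbf{1}}(a) = \mathbf{1}_V$ there, a normalization your uniform argument does not need for the lemma as stated.
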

Before we prove the Key Lemma we state and prove two easy technical lemmas concerning a constant rank sheaf whose linear morphisms are all invertible. These results rely on the construction of the B.H category and on the fact that functors preserve composition of arrows.

\begin{lemma}\label{L: comp 1.a}
Let $\Lambda = (\gs, G_{\sigma}, \psi_a, g_{a,b})$ be a complex of groups, $C\Lambda$ the B.H. category and $\mathcal{F}$ a constant rank sheaf over $\cg$ whose linear morphisms are all invertible. Then for $(\idGrp,a),(g,\idCat{i(a)}),(\psi_{a}(g),\idCat{t(a)}) \in \Arr(C\Lambda)$ we get the relation, 
$$
\mathcal{F}((g,\idCat{i(a)})) = \mathcal{F}\big((\idGrp,a)\big)^{-1}\circ\mathcal{F}\big((\psi_a(g),\idCat{t(a)})\big)\circ\mathcal{F}\big((\idGrp,a)\big) 
$$
\end{lemma}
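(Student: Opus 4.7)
The plan is to obtain the stated conjugation identity from a single equality of arrows in $C\Lambda$ by applying functoriality of $\mathcal{F}$ and then using invertibility. The composition rule of the B.H.\ category reads $(g, \alpha)(h, \beta) = (g\,\psi_\alpha(h)\, g_{\alpha,\beta},\, \alpha\beta)$. Using the standard normalization that twisting elements $g_{\alpha, \idCat{\sigma}}$ and $g_{\idCat{\sigma}, \beta}$ are trivial, together with $\psi_{\idCat{\sigma}} = \mathrm{Id}$ (these are precisely what make $(\idGrp, \idCat{\sigma})$ an identity arrow at $\sigma$), I would compute
\[
(\idGrp, a)(g, \idCat{i(a)}) \;=\; (\psi_a(g),\, a) \;=\; (\psi_a(g), \idCat{t(a)})(\idGrp, a)
\]
directly from the composition formula. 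This commutation in $\Arr(C\Lambda)$ is the entire combinatorial content of the lemma.

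Applying the functor $\mathcal{F}$ to this equality and using its preservation of composition yields
\[
\mathcal{F}\bigl((\idGrp, a)\bigr) \circ \mathcal{F}\bigl((g, \idCat{i(a)})\bigr) \;=\; \mathcal{F}\bigl((\psi_a(g), \idCat{t(a)})\bigr) \circ \mathcal{F}\bigl((\idGrp, a)\bigr).
\]
Since $\mathcal{F}$ has constant rank and every morphism in its image is invertible by hypothesis, $\mathcal{F}\bigl((\idGrp, a)\bigr)$ is a linear isomorphism between vector spaces of the same dimension. Composing both sides on the left with its inverse produces exactly the conjugation formula stated in the lemma.

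The only item that requires mild care is the normalization of twisting elements when one entry is an identity arrow, but this is the standard convention underlying the construction of the B.H.\ category and is implicit in the notation $\idCat{\sigma}$ used for the identity. Beyond this bookkeeping there is no serious obstacle: the lemma is a direct unwinding of functoriality together with the B.H.\ composition law, and I do not anticipate any genuine difficulty in writing it out.
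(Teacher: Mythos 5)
Your proposal is correct and follows essentially the same route as the paper: both rest on the arrow identity $(\idGrp,a)(g,\idCat{i(a)}) = (\psi_a(g),\idCat{t(a)})(\idGrp,a)$ in $C\Lambda$, followed by functoriality and left-composition with $\mathcal{F}\big((\idGrp,a)\big)^{-1}$. The only difference is that you derive that identity explicitly from the composition law and the normalization of twisting elements, whereas the paper simply records it via a commutative diagram.
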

\begin{proof}
The arrows fit into the commutative diagram,
$$
\xymatrix{
\tau_{i(a)} \ar[r]^{(\idGrp,a)} \ar[d]_{(g,\idCat{\tau_{i(a)}})} & \ar[d]^{(\psi_a(g),\mathbbm{1}_{\tau_{t(a)}})} \tau_{t(a)}\\
\tau_{i(a)} \ar[r]^{(\idGrp,a)} & \tau_{t(a)}
}
$$
The diagram yields the relation, 
$$
 (\idGrp,a)(g,\idCat{\tau_{i(a)}}) = (\psi_a(g),\idCat{\tau_{t(a)}})(\idGrp,a)
$$
Functors preserve composition of arrows. Since all linear morphisms of $\mathcal{F}$ are invertible and the rank of $\mathcal{F}$ is constant,
\begin{align*}
    \mathcal{F}\big((\mathbf{1},a)\big)\mathcal{F}\big((g,\mathbbm{1}_{\tau_{i(a)}})\big) &= \mathcal{F}\big((\psi_a(g),\mathbbm{1}_{\tau_{t(a)}})\big)\mathcal{F}\big((\mathbf{1},a)\big)\\
    \mathcal{F}\big((g,\mathbbm{1}_{\tau_{i(a)}})\big) &= \mathcal{F}\big((\mathbf{1},a)\big)^{-1}\mathcal{F}\big((\psi_a(g),\mathbbm{1}_{\tau_{t(a)}})\big)\mathcal{F}\big((\mathbf{1},a)\big)
\end{align*}
\end{proof}
\begin{lemma}\label{L: comp 1.b}
Let $\Lambda = (\gs, G_{\sigma}, \psi_a, g_{a,b})$ be a complex of groups, $C\Lambda$ the B.H. category and $\mathcal{F}$ a constant rank sheaf over $\cg$ whose linear morphisms are all invertible. Then for composable arrows $(\mathbf{1},a),(\mathbf{1},b) \in \Arr(C\Lambda)$ we get the relation,
$$
\mathcal{F}\big((\mathbf{1},a)\big)\circ\mathcal{F}\big((\mathbf{1},b)\big) = \mathcal{F}(g_{a,b},\mathbbm{1}_{t(a)})\circ\mathcal{F}\big((\mathbf{1},ab)\big)
$$
\end{lemma}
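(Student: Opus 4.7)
My plan is to mirror the strategy of Lemma \ref{L: comp 1.a}: exhibit a commutative square in $C\Lambda$ whose two composite arrows agree, then apply the functor $\mathcal{F}$. Recall from the definition of the B.H. category that composition is given by
$$
(g,\alpha)(h,\beta) = \bigl(g\,\psi_{\alpha}(h)\,g_{\alpha,\beta},\ \alpha\beta\bigr).
$$
First I would compute the left-hand composite in $C\Lambda$: since $(\mathbf{1},a)$ and $(\mathbf{1},b)$ are composable,
$$
(\mathbf{1},a)(\mathbf{1},b) = \bigl(\mathbf{1}\cdot\psi_a(\mathbf{1})\cdot g_{a,b},\ ab\bigr) = (g_{a,b},\ ab).
$$

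Next I would compute the right-hand composite. The arrow $(g_{a,b},\idCat{t(a)})$ is a loop at $t(a)$ and $(\mathbf{1},ab)$ has terminal vertex $t(a)$, so they are composable, and
$$
(g_{a,b},\idCat{t(a)})(\mathbf{1},ab) = \bigl(g_{a,b}\cdot\psi_{\idCat{t(a)}}(\mathbf{1})\cdot g_{\idCat{t(a)},ab},\ ab\bigr) = (g_{a,b},\ ab),
$$
since the structural homomorphism attached to an identity arrow is the identity on $G_{t(a)}$ and any twisting element involving an identity arrow is trivial. Thus the same arrow $(g_{a,b},ab)\in\Arr(C\Lambda)$ admits two factorizations, which fit into the commutative diagram
$$
\xymatrix{
\tau_{i(b)} \ar[r]^{(\mathbf{1},b)} \ar[dr]_{(g_{a,b},ab)} & \tau_{t(b)}=\tau_{i(a)} \ar[d]^{(\mathbf{1},a)} \\
 & \tau_{t(a)} \ar@(dr,ur)[]_{\ (g_{a,b},\idCat{t(a)})}
}
$$
(the loop is traversed after the straight composite on the right).

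Finally, applying the functor $\mathcal{F}$ preserves composition of arrows, so
$$
\mathcal{F}\bigl((\mathbf{1},a)\bigr)\circ\mathcal{F}\bigl((\mathbf{1},b)\bigr) = \mathcal{F}\bigl((g_{a,b},ab)\bigr) = \mathcal{F}\bigl((g_{a,b},\idCat{t(a)})\bigr)\circ\mathcal{F}\bigl((\mathbf{1},ab)\bigr),
$$
which is the claimed identity. There is really no main obstacle here: the hypotheses that $\mathcal{F}$ has constant rank and invertible morphisms are not used in this argument (they were essential in Lemma \ref{L: comp 1.a} for isolating $\mathcal{F}((g,\idCat{i(a)}))$), and the entire content of the lemma is the bookkeeping of the B.H.\ composition formula combined with the fact that $\mathcal{F}$ is a functor.
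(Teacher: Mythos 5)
Your proof is correct and follows essentially the same route as the paper's: both compute $(\mathbf{1},a)(\mathbf{1},b) = (g_{a,b},ab) = (g_{a,b},\idCat{t(a)})(\mathbf{1},ab)$ in $C\Lambda$ and then apply functoriality of $\mathcal{F}$. Your version is slightly more explicit in verifying the second factorization via the composition formula, and your observation that the constant-rank and invertibility hypotheses are not actually needed here is accurate.
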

\begin{proof}
We compose $(\mathbf{1},a)$ and $(\mathbf{1},b)$,
\begin{align*}
    (\mathbf{1},a)(\mathbf{1},b) &= (g_{a,b}, ab)\\
    &= (g_{a,b}, \mathbbm{1}_{t(a)})(\mathbf{1}, ab)
\end{align*}
Functors preserve composition of arrows,
\begin{align*}
    \mathcal{F}\big((\mathbf{1},a)\big)\mathcal{F}\big((\mathbf{1},b)\big) &= \mathcal{F}\big((g_{a,b}, \mathbbm{1}_{t(a)})\big)\mathcal{F}\big((\mathbf{1}, ab)\big)
\end{align*}
\end{proof}
We now give the proof of Lemma \ref{keylemma} the Key Lemma.
\begin{proof}[Proof (Key Lemma)]
 We start by proving the two conditions from Definition \ref{complex of groups morphism}
$$
\phi_{t(a)}\psi_a = \mathrm{Ad}\big(\phi_{\idGrp}(a)\big)\phi_{i(a)} \hspace{.1in}\text{ and }\hspace{.1in} \phi_{t(a)}(g_{a,b})\phi(ab) = \phi(a)\phi(b)
$$

Suppose first $a\in T$. There are two cases,

\begin{align*}
    \textbf{Case 1:} &\\ 
    \phi_{\idGrp}(a) &= \sheafpath{t(a)}^{-1}\mathcal{F}\big((\idGrp,a)\big)\sheafpath{i(a)}\\
    &=\sheafpath{i
    (a)}^{-1}\mathcal{F}\big((\idGrp,a)\big)^{-1}\mathcal{F}\big((\idGrp,a)\big)\sheafpath{i(a)} \\
    &= \idGrp_V
\end{align*}
\begin{align*}
    \textbf{Case 2:} &\\ 
    \phi_{\idGrp}(a) &= \sheafpath{t(a)}^{-1}\mathcal{F}\big((\idGrp,a)\big)\sheafpath{i(a)}\\
    &=\sheafpath{t
    (a)}^{-1}\mathcal{F}\big((\idGrp,a)\big)\mathcal{F}\big((\idGrp,a)\big)^{-1}\sheafpath{t(a)} \\
    &= \idGrp_V
\end{align*}

For $a\not\in T$, let $g\in G_{i(a)}$. The left hand side of the first condition reads
\begin{align*}
    \phi_{t(a)}\big(\psi_a(g)\big) &= M_{t(a)}^{-1}\mathcal{F}\big((\psi_a(g),\mathbbm{1}_{t(a)})\big) M_{t(a)} \\
    &= M_{t(a)}^{-1}\mathcal{F}\big((\idGrp,a)\big)\mathcal{F}\big((g,\mathbbm{1}_{i(a)})\big)\mathcal{F}\big((\mathbf{1},a)\big)^{-1}M_{t(a)}\\
    &= M_{t(a)}^{-1}\mathcal{F}\big((\mathbf{1},a)\big)M_{i(a)} M_{i(a)}^{-1}\mathcal{F}\big((g,\mathbbm{1}_{i(a)})\big)M_{i(a)} M_{i(a)}^{-1}\mathcal{F}\big((\mathbf{1},a)\big)^{-1}M_{t(a)}
    \\
    &= \phi_{\idGrp}(a)\phi_{i(a)}(g)\phi_{\idGrp}(a)^{-1}\\
    &= \mathrm{Ad}\big(\phi_{\idGrp}(a)\big)\phi_{i(a)}(g)
\end{align*}
The second line is justified by Lemma \ref{L: comp 1.a}

For composable edges $a,b\not\in \Arr(T)$, $i(a) = t(b)$, we observe the right hand side of the second equation,
\begin{align*}
    \phi_{\idGrp}(a)\phi_{\idGrp}(b) &= M_{t(a)}^{-1}\mathcal{F}\big((\idGrp,a)\big)M_{i(a)}M_{t(b)}^{-1}\mathcal{F}\big((\idGrp,b)\big)M_{i(b)}\\
    &= M_{t(a)}^{-1}\mathcal{F}\big((\mathbf{1},a)\big)\mathcal{F}\big((\idGrp,b)\big)M_{i(b)}\\
    &= M_{t(a)}^{-1}\mathcal{F}(g_{a,b},\mathbbm{1}_{t(a)})\mathcal{F}\big((\mathbf{1},ab)\big)M_{i(b)}\\
    &= M_{t(a)}^{-1}\mathcal{F}(g_{a,b},\mathbbm{1}_{t(a)})M_{t(a)}M_{t(a)}^{-1}\mathcal{F}\big((\idGrp,ab)\big)M_{i(b)} \\
    &= \phi_{t(a)}(g_{a,b})\phi_{\idGrp}(ab)
\end{align*}
The third line is justified by Lemma \ref{L: comp 1.b}. 

We make the further assumption that $\mathcal{F}$ satisfies the dev-properties, by Lemma \ref{local group hom} each $\phi_{\sigma}$ is injective. Hence, $\phi$ is injective on local groups.
\end{proof}

This gives the other direction of Theorem \ref{main}
\begin{theorem}\label{otherdirection}
Let $\cg$ be a complex of groups and $\mathcal{F}$ a sheaf over $\cg$ that satisfies the dev-properties with witness $\GL(V)$. Then $\cg$ is developable. 
\end{theorem}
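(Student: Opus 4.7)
The plan is to treat this theorem as a direct corollary of the Key Lemma (Lemma \ref{keylemma}) and Bridson--Haefliger's criterion (Theorem \ref{key theorem}). Since Theorem \ref{key theorem} characterizes developability by the existence of a morphism $\phi : \cg \to \Theta(G)$ which is injective on local groups, I only need to exhibit such a $\phi$ with $G = \GL(V)$, and the hypotheses on $\mathcal{F}$ are designed precisely so that the construction in Subsection \ref{sheafpaths} does this for me.

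First, I would fix a basepoint $v \in \Obj(\gs)$ and set $V := \mathcal{F}(v)$, then choose a maximal tree $T \subseteq \gs$. Because $\mathcal{F}$ has constant rank and all morphisms $\mathcal{F}(\lambda)$ are invertible, the formula defining the sheaf path $M_\sigma : V \to \mathcal{F}(\sigma)$ makes sense for every $\sigma \in \Obj(\gs)$ (we are allowed to traverse arrows of $T$ in either direction, and the target dimensions match). Second, I would feed these sheaf paths into Definition \ref{keydef} to assemble the candidate
\[
\phi = \bigl(\phi_\sigma,\, \phi_{\mathbf{1}}(a)\bigr) : \cg \longrightarrow \Theta(\GL(V)),
\]
where $\phi_\sigma$ is provided by the local group functor of Definition \ref{lgf} and $\phi_{\mathbf{1}}(a)$ by Definition \ref{gel}. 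Third, I would invoke the Key Lemma, which asserts both that $\phi$ is a morphism of complexes of groups and that, under the dev-properties, $\phi$ is injective on local groups. Finally, applying Theorem \ref{key theorem} with the witness $G = \GL(V)$ delivers developability of $\cg$.

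The main obstacle has in fact already been dispatched inside Lemma \ref{keylemma}: the verification that the conjugation identity $\phi_{t(a)}\psi_a = \mathrm{Ad}(\phi_{\mathbf{1}}(a))\phi_{i(a)}$ and the twisting identity $\phi_{t(a)}(g_{a,b})\phi_{\mathbf{1}}(ab) = \phi_{\mathbf{1}}(a)\phi_{\mathbf{1}}(b)$ hold is precisely where the constant-rank and invertibility hypotheses are consumed, via the auxiliary Lemmas \ref{L: comp 1.a} and \ref{L: comp 1.b}; and injectivity on local groups is handled by Lemma \ref{local group hom}. Consequently the proof of the present theorem is essentially a two-line citation. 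The only minor point I would flag is the implicit use of connectedness of $\gs$ when choosing a maximal tree; if $\gs$ is disconnected one applies the same argument to each connected component with its own basepoint, and the resulting piecewise morphism still satisfies Definition \ref{complex of groups morphism} because all defining relations are local to individual arrows and composable pairs.
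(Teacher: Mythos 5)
Your proposal is correct and follows exactly the paper's own argument: the proof of Theorem \ref{otherdirection} is a two-line citation of Lemma \ref{keylemma} (which supplies the morphism $\phi:\cg\to\Theta(\GL(V))$ injective on local groups) followed by Theorem \ref{key theorem}. The additional remarks you make about fixing a basepoint, the well-definedness of the sheaf paths, and connectedness of $\gs$ are reasonable housekeeping that the paper leaves implicit, but they do not change the route.
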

\begin{proof}
 By Lemma \ref{keylemma} the map $\phi = (\phi_{\sigma}, \phi_{\mathbf{1}}(a))$ constructed in Definition \ref{keydef} is a morphism of complexes of groups and by the same lemma is injective on local groups if $\mathcal{F}$ satisfies the dev-properties. Thus, by Theorem \ref{key theorem}, $\cg$ is developable. 
\end{proof}
 \subsection{Theorem \ref{main}}\label{proofA}
 \Dev*
 \begin{proof}
  One direction is given by Theorem \ref{otherdirection} while the other is the content of Lemma \ref{onedirection}.
 \end{proof}
\printbibliography
\end{document}